\newtheorem{theorem}{Theorem}[section]
\newtheorem{proposition}[theorem]{Proposition}
\newtheorem{conjecture}[theorem]{Conjecture}
\newtheorem{claim}[theorem]{Claim}
\theoremstyle{definition}
\newtheorem{definition}[theorem]{Definition}
\newtheorem{remark}[theorem]{Remark}
\def\P{{\mathbb P}}
\def\R{{\mathbb R}}
\def\Z{{\mathbb Z}}
\def\cE{{\mathcal E}}
\def\cM{{\mathcal M}}
\def\cO{{\mathcal{O}}}
\def\cU{{\mathcal U}}
\def\rat{\dashrightarrow}
\def\cOperatorname#1{\mathop{\rm #1}\nolimits}
\def\codim{\cOperatorname{codim}}
\def\deg{\cOperatorname{deg}}
\def\rat{\cOperatorname{RatCurves}}
\def\ME{{\cOperatorname{ME}}}
\newcommand{\cME}[1]{\cOverline{\ME}}
\begin{document}

\title{Fano varieties of middle pseudoindex}

\author{Kiwamu Watanabe}
\date{\today}
\address{Department of Mathematics, Faculty of Science and Engineering, Chuo University.
1-13-27 Kasuga, Bunkyo-ku, Tokyo 112-8551, Japan}
\email{watanabe@math.chuo-u.ac.jp}
\thanks{The author is partially supported by JSPS KAKENHI Grant Number 21K03170.}

\subjclass[2020]{14J40, 14J45.}
\keywords{Fano varieties, middle pseudoindex}

\begin{abstract} Let $X$ be a complex smooth Fano variety of dimension $n$. In this paper, we give a classification of such $X$ when the pseudoindex is equal to $\dfrac{\dim X+1}{2}$ and the Picard number greater than one. 
\end{abstract}

\maketitle

\section{Introduction} Let $X$ be an $n$-dimensional smooth Fano variety, which is a complex smooth projective variety endowed with an ample anticanonical divisor $-K_X$. The {\it index} of $X$ is defined as 
$$ 
i_X:=\max\{m\in\Z_{>0}\mid -K_X=mL\,\,\mbox{for~some}\,\,L\in {\rm Pic} (X)\}.
$$  
Mukai formulated a conjecture concerning the index $i_X$ and the Picard number $\rho_X$:

\begin{conjecture}[{Mukai conjecture~\cite[Conjecture~4]{Mukai88}}]\label{conj:Mukai} We have $\rho_X(i_X-1)\leq n$, with equality if and only if $X$ is isomorphic to $(\P^{i_X-1})^{\rho_X}$.
\end{conjecture}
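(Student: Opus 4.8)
The plan is to derive Conjecture~\ref{conj:Mukai} from its natural strengthening in terms of the \emph{pseudoindex} $\iota_X$, defined as the minimum of $-K_X\cdot C$ over all rational curves $C\subset X$. Since $-K_X=i_X L$ with $L$ ample, every rational curve satisfies $-K_X\cdot C=i_X\,(L\cdot C)\ge i_X$, hence $\iota_X\ge i_X$; so the inequality $\rho_X(\iota_X-1)\le n$ formally implies $\rho_X(i_X-1)\le n$, and in the boundary case $\rho_X(i_X-1)=n$ one first checks that $\iota_X=i_X$ is forced, reducing the rigidity part as well. Thus it suffices to prove: $\rho_X(\iota_X-1)\le n$, with equality only if $X\cong(\P^{\iota_X-1})^{\rho_X}$.

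First I would fix a minimal dominating family $\cK$ of rational curves on $X$, i.e.\ one whose general member $C$ has $-K_X\cdot C=\iota_X$ and whose curves cover $X$; its existence follows from Mori's bend-and-break together with the boundedness of families of curves of bounded anticanonical degree. The key local estimate is that for a general point $x\in X$ the locus $\loc_x(\cK)$ swept out by $\cK$-curves through $x$ has dimension at least $-K_X\cdot C-1=\iota_X-1$, and, iterating, the chain loci $\Cloc_x(\cK)$ grow in increments of at least $\iota_X-1$ until they stabilize.

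Next I would consider the $\cK$-rationally connected fibration $\pi\colon X\dashrightarrow Z$, whose fibres are the equivalence classes for the relation ``joined by a chain of $\cK$-curves''; after a blow-up $\widetilde X\to X$ one may assume $\pi$ is a morphism. The heart of the argument is a Casagrande-type comparison of Picard numbers: $\rho_X-\rho_Z$ is bounded in terms of the dimension of a general fibre $F$ of $\pi$ — morally by $\lfloor \dim F/(\iota_X-1)\rfloor$ plus a correction — combined with induction on $\rho_X$ applied to $Z$, to which (after checking that the relevant invariants of $Z$ still satisfy the hypothesis) the inductive assumption applies. Putting the fibre estimate, the base estimate and $n=\dim F+\dim Z$ together yields $\rho_X(\iota_X-1)\le n$. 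For the equality case one argues that equality must propagate down the fibration: the general fibre is forced to be $(\P^{\iota_X-1})^{m}$, the base satisfies equality, and a rigidity argument on $\pi$ — controlling the normal bundle of a fibre and ruling out extremal contractions of $X$ transverse to the product structure of $F$ — upgrades this to $X\cong(\P^{\iota_X-1})^{\rho_X}$.

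I expect the main obstacle to be precisely the Picard-number comparison across $\pi$: bounding $\rho_X-\rho_Z$ requires controlling the extremal rays of $X$ not generated by $\cK$, and in full generality this is exactly the open part of the (generalized) Mukai conjecture, so a complete proof by this route is out of reach. Under the hypothesis of the present paper, however, $\iota_X=\tfrac{n+1}{2}$ is large relative to $n$, which forces the fibration to have very few ``steps'' (the chain locus fills up $X$ quickly); this collapses the induction to a short, explicit case analysis according to the small possible values of $\rho_X$, which is the situation carried out in the sections that follow.
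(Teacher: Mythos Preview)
The statement you were asked to prove is Conjecture~\ref{conj:Mukai}, the Mukai conjecture. This is \emph{stated} in the paper as background motivation; it is not proved there, and indeed it remains open in general. There is therefore no ``paper's own proof'' to compare your attempt against.

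Your proposal does not constitute a proof either, and you say so yourself: the Picard-number comparison $\rho_X-\rho_Z$ across the rationally connected fibration is exactly the missing ingredient in the generalized Mukai conjecture (Conjecture~\ref{conj:GMukai}), and you write that ``a complete proof by this route is out of reach.'' Reducing Conjecture~\ref{conj:Mukai} to Conjecture~\ref{conj:GMukai} via $\iota_X\ge i_X$ is correct but vacuous, since the latter is at least as hard and also open.

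Your final paragraph contains a category error. You invoke ``the hypothesis of the present paper, $\iota_X=\tfrac{n+1}{2}$,'' but that hypothesis belongs to Theorem~\ref{them:main}, not to Conjecture~\ref{conj:Mukai}. The conjecture is a statement about \emph{all} smooth Fano varieties, with no constraint on $\iota_X$; you cannot import the middle-pseudoindex assumption into its proof. The ``short, explicit case analysis'' you allude to is the content of Sections~3.1--3.2, and what it establishes is Theorem~\ref{them:main}, a classification result under $2\iota_X=n+1$ and $\rho_X>1$ --- not the Mukai conjecture.
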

As a specific case of this conjecture, Mukai also conjectured that if $2i_X$ is at least $n+2$, then $\rho_X$ is one unless $X$ is isomorphic to $(\P^{i_X-1})^2$ \cite[Conjecture~4']{Mukai88}.
To prove this, Wi\'sniewski \cite{Wis90b} introduced the notion of {\it pseudoindex} $\iota_X$ of $X$:
$$
{\iota}_X:=\min\{-K_X\cdot C\mid C\subset X~\mbox{is~a~rational~curve}\},
$$ 
and arrived at the following theorem:
\begin{theorem}[{\cite{Wis90b, Wis91ind}}]\label{them:Wis} For an $n$-dimensional smooth Fano variety $X$, the following statements hold:
\begin{enumerate}
\item If $2\iota_X>n+2$, then $\rho_X=1$.
\item If $2i_X=n+2$ and $\rho_X>1$, then $X$ is isomorphic to $(\P^{i_X-1})^2$.
\item If $2i_X=n+1$ and $\rho_X>1$, then $X$ is isomorphic to one of the following:
$$\P(\cO_{\P^{i_X}}(2)\oplus\cO_{\P^{i_X}}(1)^{\oplus i_X-1}),\quad \P^{i_X-1}\times Q^{i_X}\quad  \mbox{or} \quad \P(T_{\P^{i_X}}).$$
\end{enumerate}
Here, $Q^m$ denotes a smooth quadric hypersurface, and $T_{\P^{m}}$ represents the tangent bundle of $\P^m$.
\end{theorem}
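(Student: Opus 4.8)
\textbf{Set-up.} The plan is to run the Mori theory of extremal rays, using the pseudoindex to control fibre dimensions, and to split according to the type of the extremal contractions. Since $-K_X$ is ample, the cone of curves of $X$ is rational polyhedral and each extremal ray $R$ is generated by a rational curve; it yields a contraction $\varphi_R\colon X\to Z_R$ with $\rho_{Z_R}=\rho_X-1$. Write $\ell(R)$ for the minimal anticanonical degree of a rational curve with class in $R$, so $\ell(R)\ge\iota_X$ for every $R$. Two facts are basic. First, the Ionescu--Wi\'sniewski inequality: for a non-trivial fibre $F$ of $\varphi_R$ and the exceptional locus $E$ of $\varphi_R$ (with $E=X$ when $\varphi_R$ is of fibre type), $\dim F+\dim E\ge n+\ell(R)-1\ge n+\iota_X-1$. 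Second, a general fibre $F$ of a fibre-type extremal contraction is a smooth Fano variety all of whose rational curves are rational curves on $X$, so $\iota_X\le\iota_F\le\dim F+1$; hence $\dim F\ge\iota_X-1$, with equality forcing $F\cong\P^{\dim F}$ (and, when in addition $\iota_X=i_X$, forcing $F\cong\P^{\,i_X-1}$, while one dimension up $\iota_F=\dim F$ forces $F\cong\P^{\dim F}$ or $Q^{\dim F}$). I would also isolate one elementary observation: if $R_1\ne R_2$ are extremal rays both of fibre type, with general fibres $F_1,F_2$, then $(\varphi_{R_1},\varphi_{R_2})\colon X\to Z_{R_1}\times Z_{R_2}$ is finite---a positive-dimensional fibre would be contracted by both $\varphi_{R_i}$ and hence carry a curve class in $R_1\cap R_2=\{0\}$---so $\dim F_1+\dim F_2\le n$.

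\textbf{Statements (1) and (2).} Assume $\rho_X\ge2$. The first task is to show that, under the hypothesis of (1) or of (2), $X$ has no birational extremal contraction. For a small contraction, $\dim F\le\dim E\le n-2$, so the Ionescu--Wi\'sniewski inequality gives $\iota_X\le n-3$; this already contradicts the numerical hypothesis (using $\iota_X\ge i_X$ in case (2)) when $n$ is small, and for large $n$ one invokes the classification of extremal contractions with long fibres to exclude it---the contracted locus is swept out by rational curves of anticanonical degree $\ge\iota_X$, which descend to rational curves on the base and clash with Mori's bound. For a divisorial contraction, $\dim E=n-1$ forces a general fibre of dimension $\ge\iota_X$, so large relative to $n$ that the same circle of ideas identifies $X$ (in the extreme case $X$ is the blow-up of a point, forcing $\iota_X\le2$), and one checks the hypothesis fails. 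Hence every extremal contraction is of fibre type; choosing two of them and applying the elementary observation gives $2(\iota_X-1)\le\dim F_1+\dim F_2\le n$, i.e.\ $2\iota_X\le n+2$. In case (1) this contradicts $2\iota_X>n+2$, so $\rho_X=1$. In case (2), $2i_X=n+2$ and $\iota_X\ge i_X$ force equality throughout: $\iota_X=i_X$, $\dim F_1=\dim F_2=i_X-1$, $F_1\cong F_2\cong\P^{\,i_X-1}$, and $\Phi=(\varphi_{R_1},\varphi_{R_2})\colon X\to Z_{R_1}\times Z_{R_2}$ is a finite morphism of smooth $n$-folds; restricting $\Phi$ to a fibre $F_1$ exhibits each $Z_{R_i}$ as an $(i_X-1)$-dimensional image of a projective space, so $Z_{R_i}\cong\P^{\,i_X-1}$, and since $\Phi|_{F_1}$ is an isomorphism onto its image, $\deg\Phi=1$ and $\Phi$ is an isomorphism. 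Hence $X\cong(\P^{\,i_X-1})^2$.

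\textbf{Statement (3).} Assume $2i_X=n+1$ (so $n$ is odd) and $\rho_X\ge2$; then $\iota_X\ge i_X=\tfrac{n+1}{2}$. Again one first analyses the extremal contractions. If some extremal contraction is birational, the long-fibre estimates force it to be a smooth blow-down, and reconstructing $X$ from the blow-up subject to $-K_X=i_XL$ leads to $\P(\cO_{\P^{i_X}}(2)\oplus\cO_{\P^{i_X}}(1)^{\oplus i_X-1})$ (concretely, the blow-up of $\P^n$ along a linear $\P^{\,i_X-2}$). Otherwise every extremal contraction is of fibre type; the elementary observation, together with $\dim F_i\ge\iota_X-1\ge i_X-1$, forces $\iota_X=i_X$ and the fibre dimensions of any two fibre-type contractions to lie in $\{i_X-1,\,i_X\}$ with sum $\le n$. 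A fibre of dimension $i_X-1$ is $\cong\P^{\,i_X-1}$, and a fibre of dimension $i_X$ is a Fano $i_X$-fold of pseudoindex $\ge i_X$, hence $\cong Q^{i_X}$ (the possibility $\P^{i_X}$ being excluded, as it would make $X\cong\P^{\,i_X-1}\times\P^{i_X}$, of index $1\ne i_X$). Feeding these fibre types back into the two contraction morphisms---using that the bases have Picard number one, adjunction, and the constraint $-K_X=i_XL$ to pin down the bases and the splitting type of the relevant bundle---leaves precisely $\P^{\,i_X-1}\times Q^{i_X}$ and $\P(T_{\P^{i_X}})$.

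\textbf{The main obstacle.} The elementary observation and the numerics are routine; the real content is the structural analysis of extremal contractions with long fibres---excluding birational (and in particular small) contractions in (1)--(2) for all $n$, and in (3) carrying out the blow-down reconstruction and the index bookkeeping that separates the three outcomes. That is where essentially all of the work lies.
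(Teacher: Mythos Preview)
The paper does not prove Theorem~\ref{them:Wis}; it is quoted from \cite{Wis90b, Wis91ind} as a known result, so there is no proof here to compare your outline against. The paper's own contribution is Theorem~\ref{them:main}, which strengthens part~(iii) by replacing the index $i_X$ with the pseudoindex $\iota_X$ (and accordingly adds $\P^{\iota_X-1}\times\P^{\iota_X}$ to the list).

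That said, your sketch for~(iii) does run parallel to the paper's proof of Theorem~\ref{them:main}: split on whether some extremal contraction is birational (Proposition~\ref{prop:bir}) or all are of fibre type (Proposition~\ref{prop:fib}); in each branch use Ionescu--Wi\'sniewski together with the bound $\dim F\ge\iota_X-1$ and the finite-intersection property of fibres of distinct elementary contractions, then identify the base via Lazarsfeld and finish with a bundle classification (Proposition~\ref{prop:bundle}). One methodological difference is worth flagging: to rule out \emph{small} contractions in the birational branch the paper (Claim~\ref{cl:bir}) does not appeal to any ``classification of long-fibre contractions'' but instead brings in an unsplit minimal rational component $\cM$, notes $\dim{\rm Locus}(\cM_x)\ge\iota_X-1$ for $x\in F$, and derives a contradiction from $\dim F\ge\iota_X+1$ via \cite[II, Corollary~4.21]{Kb}. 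Your exclusion of $\P^{\iota_X-1}\times\P^{\iota_X}$ in~(iii) on index grounds is correct, and is exactly why that variety appears in Theorem~\ref{them:main} but not in Theorem~\ref{them:Wis}.

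Your outline for~(i)--(ii), however, is not a proof as written. The bound you extract from a hypothetical small contraction, $\iota_X\le n-3$, is compatible with $2\iota_X>n+2$ for all $n\ge 9$, and the phrases ``invoke the classification of extremal contractions with long fibres'' and ``clash with Mori's bound'' do not name an actual argument; likewise ``the same circle of ideas identifies $X$'' for the divisorial case is not a step. In Wi\'sniewski's argument the birational and fibre-type cases are not separated beforehand in this way: one plays two extremal rays against each other directly---choosing the second so that its locus meets the exceptional locus of the first, exactly as the paper does around Claim~\ref{cl:fiber}---and the contradiction with $2\iota_X>n+2$ (respectively the product identification when $2i_X=n+2$) comes out of the resulting fibre-dimension inequality uniformly. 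Your ``elementary observation'' is the right tool, but you have confined it to the fibre-type case and then tried to reduce to that case by an argument that does not go through.
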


Theorem~\ref{them:Wis}~(i) and (ii) provide an affirmative answer to \cite[Conjecture~4']{Mukai88}. Following this, by substituting the index $i_X$ with the pseudoindex $\iota_X$, Bonavero-Casagrande-Debarre-Druel proposed a generalized version of Conjecture~\ref{conj:Mukai}:
\begin{conjecture}[{generalized Mukai conjecture~\cite[Conjecture]{BCDD03}}]\label{conj:GMukai} We have $\rho_X({\iota}_X-1)\leq n$, with equality if and only if $X$ is isomorphic to $(\P^{{\iota}_X-1})^{\rho_X}$.
\end{conjecture}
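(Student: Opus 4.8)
The plan is to prove the inequality by producing $\rho_X$ numerically independent unsplit families of rational curves whose chains dominate $X$, and then to bound the dimension of the resulting chain-locus from below by the accumulated pseudoindex. First I would dispose of the base case $\rho_X=1$: here the assertion reduces to $\iota_X\le n+1$, which is the standard bound on the pseudoindex, with equality forcing $X\cong\P^n$ by the Cho--Miyaoka--Shepherd-Barron/Kebekus characterization of projective space as the only $n$-fold carrying a family of rational curves of anticanonical degree $n+1$ through a general point; this is consistent with $(\P^{\iota_X-1})^{\rho_X}=\P^n$. So from now on assume $\rho_X\ge 2$.

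The geometric heart is the following dimension estimate for chain-loci, which I would establish from the deformation theory of rational curves. If $\mathcal{M}_1,\dots,\mathcal{M}_k$ are unsplit families of rational curves whose numerical classes $[\mathcal{M}_1],\dots,[\mathcal{M}_k]$ are linearly independent in $N_1(X)$, then for general $x\in X$ the locus $\cloc(\mathcal{M}_1,\dots,\mathcal{M}_k)_x$ of points joined to $x$ by a connected chain of curves from these families satisfies
\[
\dim\cloc(\mathcal{M}_1,\dots,\mathcal{M}_k)_x\ \ge\ \sum_{i=1}^{k}\bigl(-K_X\cdot\mathcal{M}_i-1\bigr)\ \ge\ k\,(\iota_X-1).
\]
The first inequality is the additivity of locus-dimension along independent unsplit families — the independence of the classes is exactly what prevents the chain-locus from collapsing into lower dimension — while the second uses $-K_X\cdot\mathcal{M}_i\ge\iota_X$ by definition of the pseudoindex. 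Granting a configuration with $k=\rho_X$ and $\cloc(\mathcal{M}_1,\dots,\mathcal{M}_{\rho_X})_x=X$, the inequality is immediate, since $n=\dim X\ge\rho_X(\iota_X-1)$.

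For the equality characterization, suppose $n=\rho_X(\iota_X-1)$, so that every intermediate inequality above is forced to be sharp. In particular each $\Loc(\mathcal{M}_i)_x$ is $(\iota_X-1)$-dimensional with $-K_X\cdot\mathcal{M}_i=\iota_X=\dim\Loc(\mathcal{M}_i)_x+1$, whence the curves of $\mathcal{M}_i$ realize $\Loc(\mathcal{M}_i)_x$ as a projective space $\P^{\iota_X-1}$ by the same projective-space characterization applied to the sub-locus. The sharpness of the chain additivity then forces these loci to meet in the expected (transverse) dimension, and I would bootstrap this rigidity by running the extremal contraction associated with each class $[\mathcal{M}_i]$, showing it is a $\P^{\iota_X-1}$-bundle that globally splits, so as to assemble $X\cong(\P^{\iota_X-1})^{\rho_X}$.

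The main obstacle is the existence step: producing $\rho_X$ unsplit families with independent classes whose chains cover $X$. Being Fano, $X$ is rationally connected, so chains of a single minimal dominating family already join general points; the real difficulty is to \emph{decompose} this connectedness according to the Picard number while keeping each family both unsplit and of minimal anticanonical degree. The natural candidates are minimal curves in the (at least $\rho_X$) extremal rays of $\overline{\NE}(X)$, but these curves may fail to be unsplit, the number of families reaching a general point by chains may drop below $\rho_X$, and birational extremal contractions obstruct any clean fiber-type induction that would control $\iota_Y$ and $\rho_Y$ on the target of a contraction. Ruling out precisely those configurations in which the families "waste" Picard number is, I expect, where the essential work lies, and it is what keeps Conjecture~\ref{conj:GMukai} open beyond the special ranges of pseudoindex treated here.
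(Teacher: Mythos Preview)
There is nothing to compare against: the paper does not prove this statement. Conjecture~\ref{conj:GMukai} is stated in the introduction as an open problem (the generalized Mukai conjecture of Bonavero--Casagrande--Debarre--Druel), and the paper's contribution is not a proof of it but the classification in Theorem~\ref{them:main} of the boundary case $2\iota_X=n+1$ with $\rho_X>1$.

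Your write-up is not really a proof either, and you are aware of this: your final paragraph explicitly names the gap that keeps the conjecture open. What you have sketched is the standard chain-locus strategy (in the spirit of Andreatta--Chierici--Occhetta and related work): build $\rho_X$ numerically independent unsplit covering families, bound $\dim\cloc(\mathcal{M}_1,\dots,\mathcal{M}_{\rho_X})_x$ from below by $\sum(-K_X\cdot\mathcal{M}_i-1)\ge\rho_X(\iota_X-1)$, and read off the inequality once the chain-locus is all of $X$. The dimension estimate you quote is correct under the unsplit and numerical-independence hypotheses, and the $\rho_X=1$ base case via Cho--Miyaoka--Shepherd-Barron/Kebekus is fine. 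But the existence step---producing $\rho_X$ such families whose chains cover $X$---is exactly the missing ingredient, as you say; minimal extremal curves need not be unsplit, and birational contractions block the obvious induction. So your proposal is an honest outline of the known approach together with its known obstruction, not a proof, and the paper makes no claim to have one.
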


Additionally, Occhetta established in \cite[Corollary~4.3]{Occ06} that a smooth Fano variety $X$ is isomorphic to $(\P^{\iota_X-1})^2$ if $2\iota_X$ equals $n+2$ and $\rho_X$ is greater than one. By combining this with Theorem~\ref{them:Wis}~(i), a generalized version of \cite[Conjecture~4']{Mukai88} was derived. The aim of this brief paper is to establish the following theorem as an extension of Theorem~\ref{them:Wis}~(iii):
\begin{theorem}\label{them:main} Let $X$ be a smooth Fano variety of dimension $n$. If $2\iota_X= n+1$ and $\rho_X>1$, then $X$ is isomorphic to one of the following:
\begin{enumerate}
\item the blow-up of projective space $\P^n$ along a linear subspace $\P^{\iota_X-2}$, i.e., $\P(\cO_{\P^{\iota_X}}(2)\oplus\cO_{\P^{\iota_X}}(1)^{\oplus \iota_X-1});$
\item the product of projective space $\P^{\iota_X-1}$ and a quadric hypersurface $Q^{\iota_X}$, i.e., $\P^{\iota_X-1}\times Q^{\iota_X};$ 
\item the projectivization of the tangent bundle $T_{\P^{\iota_X}}$ of $\P^{\iota_X}$, i.e., $\P(T_{\P^{\iota_X}});$ 
\item the product of projective spaces $\P^{\iota_X-1}$ and $\P^{\iota_X}$, i.e., $\P^{\iota_X-1}\times \P^{\iota_X}.$ 
\end{enumerate}
\end{theorem}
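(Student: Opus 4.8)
The plan is to compare the new hypothesis $2\iota_X = n+1$ with the classical case $2i_X = n+1$ settled in Theorem~\ref{them:Wis}~(iii), and to show that under the pseudoindex assumption the extra possibility $\P^{\iota_X-1}\times\P^{\iota_X}$ appears while in all other cases we are forced back into the index situation. Since always $\iota_X \geq i_X$, the hypothesis $2\iota_X = n+1$ gives $2i_X \leq n+1$; the content is to rule out $2i_X < n+1$ unless we land on a product. I would begin by invoking the structure theory of Fano varieties with large pseudoindex: because $\rho_X > 1$, a theorem of Wi\'sniewski-type (or the results behind Theorem~\ref{them:Wis}~(i) applied to extremal contractions) shows that every elementary contraction $\varphi\colon X \to Y$ of an extremal ray $R$ has fiber dimension $\dim F \geq \iota_X - 1$ over its image, by the standard bound $\dim F + \dim \varphi(F) \geq \dim X$ combined with the length estimate $\ell(R) \geq \iota_X$ forcing large fibers. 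With $n = 2\iota_X - 1$, two such contractions already account for essentially all of $\dim X$, so the geometry is very rigid.

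The main case division I would set up is on the number and type of extremal contractions. If $X$ carries a fiber-type contraction $\varphi\colon X\to Y$ with $\dim Y \geq 1$, then the general fiber $F$ is a Fano variety with $\iota_F \geq \iota_X$ and $\dim F \leq n - 1 = 2\iota_X - 2 < 2\iota_X$, hence $2\iota_F > \dim F$, which by Theorem~\ref{them:Wis}~(i) applied to $F$ (or its own structure) forces $F \cong \P^{\iota_X-1}$ and $\dim F = \iota_X - 1$, so $\dim Y = \iota_X$. Then I would identify the base: $Y$ is itself Fano-like of dimension $\iota_X$ with pseudoindex constraints inherited through the contraction, and a base-change / rigidity argument (the contraction is a projective bundle by Fujita-type or Araujo-Druel-type criteria, since the fibers are projective spaces of the expected dimension and $X$ is smooth) exhibits $X$ as $\P(\mathcal{E})$ over $Y$; matching $\iota_X$ and running through the short list of possible $Y$ ($\P^{\iota_X}$ or $Q^{\iota_X}$) and possible bundles $\mathcal{E}$ reproduces cases (1), (2), (3), while the split case over $\P^{\iota_X}$ with $\mathcal{E}$ already having the right splitting yields (4). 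If instead $X$ has no fiber-type contraction, every extremal contraction is birational; but the fiber-dimension bound $\dim F \geq \iota_X - 1 = (n-1)/2$ for the exceptional locus of a birational contraction on an $n$-fold is extremely restrictive, and I would use the classification of such "special" birational contractions (blow-ups of smooth centers, by Andreatta-Occhetta or Wi\'sniewski) to conclude $X$ is the blow-up of $\P^n$ along $\P^{\iota_X - 2}$, i.e. case (1) again, after checking the numerical invariants force exactly this center.

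Throughout, the technical engine is the interplay between $\iota_X$ and the lengths and fiber dimensions of extremal rays, together with known characterizations of $\P^m$ and $Q^m$ by their pseudoindex (e.g. Cho-Miyaoka-Shepherd-Barron, Kebekus, Dedieu-Höring) to pin down fibers and bases once their dimension is forced. The step I expect to be the main obstacle is the careful bookkeeping when $\rho_X \geq 3$ or when the two "independent" contractions interact: one must show that after the first contraction the quotient still has enough positivity that its own pseudoindex or extremal behavior is controlled, so that an inductive or direct product-decomposition argument (in the spirit of Occhetta's work and the proof of the generalized Mukai conjecture in low dimension) closes the loop. Showing that no genuinely new high-Picard-number example slips through — that the list is exactly these four — will require ruling out configurations of three or more extremal rays by a dimension count against $n = 2\iota_X - 1$, which I expect to be the most delicate part of the argument.
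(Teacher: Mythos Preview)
Your overall strategy---split on contraction type, bound fiber dimensions via the length inequality, identify fibers and bases by the Cho--Miyaoka--Shepherd-Barron/Kebekus/Dedieu--H\"oring characterizations, and then invoke a projective-bundle criterion---matches the paper's. But there is a real gap in your fiber-type case. From a single fiber-type contraction $\varphi\colon X\to Y$ you only know $\dim F\le n-1$ and $\iota_F\ge\iota_X$, which gives $2\iota_F\ge\dim F+2$; this does \emph{not} force $F\cong\P^{\iota_X-1}$. Theorem~\ref{them:Wis}~(i) needs the strict inequality $2\iota_F>\dim F+2$, and in the boundary case the fiber could have dimension $\iota_X=(n+1)/2$ and be $\P^{\iota_X}$ or $Q^{\iota_X}$. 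The paper closes this by taking a \emph{second} extremal ray $R'$ with contraction $\psi\colon X\to Z$ and using that fibers of distinct elementary contractions meet only in finitely many points: this yields $\dim F+\dim F'\le n$, which combined with the Ionescu--Wi\'sniewski lower bound $\dim F,\dim F'\ge(n-1)/2$ pins down each fiber dimension to be $(n-1)/2$ or $(n+1)/2$. A short case analysis then shows that at least one of $\varphi,\psi$ is a genuine $\P^{(n-1)/2}$- or $\P^{(n+1)/2}$-bundle, after which Proposition~\ref{prop:bundle} finishes. Your sketch never brings in the second ray to cap $\dim F$ from above, so the deduction $\dim F=\iota_X-1$ is unjustified.

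Your birational case is also looser than it needs to be. The paper does not attempt to classify a Fano variety all of whose contractions are birational; instead, starting from one birational contraction $\varphi$, it first shows (via an unsplit covering family argument) that $\varphi$ is divisorial, then produces a second extremal ray $R'$ with $E\cdot C'>0$ and proves its contraction $\psi$ is of fiber type. The same finite-intersection trick then forces $\psi$ to be an equidimensional $\P^{(n-1)/2}$-bundle, and a nontrivial fiber $F\cong\P^{(n+1)/2}$ of $\varphi$ maps finitely onto the base $Z$, so Lazarsfeld's theorem gives $Z\cong\P^{(n+1)/2}$. In other words, the case ``all contractions birational'' is in fact vacuous here, and the route to item~(i) still passes through a projective-bundle structure rather than a direct blow-up classification. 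Your worry about $\rho_X\ge 3$ dissolves once the bundle structure is in hand, since Proposition~\ref{prop:bundle} handles the base uniformly.
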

In a manner akin to the argument presented in \cite{Wis91ind}, the pivotal aspect of establishing Theorem~\ref{them:main} lies in proving that $X$ possesses a projective bundle structure $\pi: X\to W$. Subsequently, we prove that the base variety $W$ is either a projective space or a smooth quadric hypersurface. Applying \cite[Corollary~4.7]{Fuj16} and  \cite[Lemme~2.5]{BCDD03} (detailed in Proposition~\ref{prop:bundle} below), we thereby derive our desired conclusion.

\subsection*{Notation and Conventions}\label{subsec:NC} In this paper, we work over the complex number field. Our notation is consistent with the books \cite{Har}, \cite{Kb} and \cite{KM}.  
\begin{itemize}
\item For projective varieties $X, Y$ and $F$, a smooth surjective morphism $f:X\to Y$ is called an {\it $F$-bundle} if any fiber of $f$ is isomorphic to $F$. A surjective morphism $f:X\to Y$ with connected fibers is called an {\it $F$-fibration} if general fibers are isomorphic to $F$.
\item A contraction of an extremal ray is called an {\it elementary contraction}.
\item For a smooth projective variety $X$, we denote by $\rho_X$ the Picard number of $X$ and by $T_X$ the tangent bundle of $X$.
\end{itemize}

\section{Preliminaries} 

\subsection{Fano varieties with large pseudoindex}
Let us start by reviewing certain results concerning Fano varieties with large pseudoindex.
\begin{theorem}[{\cite{CMSB}, \cite{Keb02}, \cite{DH17}}]\label{them:fano:collect} Let $X$ be a smooth Fano variety of dimension $n$ with pseudoindex $\iota_X$. Then, the following holds.
\begin{enumerate}
\item If $\iota_X\geq n+1$, then $X$ is isomorphic to $\P^n$. 
\item If $\iota_X= n$, then $X$ is isomorphic to $Q^n$.
\end{enumerate}
\end{theorem}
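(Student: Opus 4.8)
The plan is to deduce both statements from the theory of minimal rational curves and the tangent map at a general point, invoking the numerical characterization of $\P^n$ due to Cho--Miyaoka--Shepherd-Barron (as streamlined by Kebekus) and that of $Q^n$ due to Dedieu--H\"oring. First I would record the elementary bound coming from Mori's bend-and-break: since $X$ is Fano, through every point there passes a rational curve $C$ with $0<-K_X\cdot C\leq n+1$. Hence $\iota_X\leq n+1$ always, and the hypothesis $\iota_X\geq n+1$ in (i) already forces $\iota_X=n+1$; similarly in (ii) the value $\iota_X=n$ is attained by a minimal rational curve.

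Next I would fix a general point $x\in X$ and a dominating family $\mathcal{K}$ of minimal rational curves, that is, rational curves of minimal anticanonical degree $\iota_X$ sweeping out $X$. Let $\mathcal{H}_x$ be the normalization of the subscheme of $\mathcal{K}$ parametrizing curves through $x$. Because $x$ is general, the curves of $\mathcal{K}$ through $x$ are free and immersed at $x$, and a standard deformation-theoretic computation (the expected dimension of the space of rational curves through a fixed general point) gives $\dim\mathcal{H}_x=-K_X\cdot\ell-2=\iota_X-2$ for a member $\ell$ of $\mathcal{K}$. I would then introduce the tangent map $\tau_x\colon\mathcal{H}_x\dashrightarrow\P(T_xX)\cong\P^{n-1}$ sending a curve to its tangent direction at $x$, whose image is the variety of minimal rational tangents $\mathcal{C}_x$. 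By Kebekus' results \cite{Keb02}, $\tau_x$ is a finite morphism, so $\dim\mathcal{C}_x=\dim\mathcal{H}_x=\iota_X-2$.

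For part (i), with $\iota_X=n+1$ one obtains $\dim\mathcal{C}_x=n-1$, so $\tau_x$ is a finite dominant morphism onto $\P^{n-1}$; the Cho--Miyaoka--Shepherd-Barron characterization \cite{CMSB}, \cite{Keb02} then shows that $\tau_x$ has degree one and that the minimal curves behave like lines, whence $X\cong\P^n$. For part (ii), with $\iota_X=n$ one obtains $\dim\mathcal{C}_x=n-2$, so $\mathcal{C}_x$ is a hypersurface in $\P^{n-1}$. Since $\iota_{\P^n}=n+1\neq n$, the variety $X$ is not isomorphic to $\P^n$, and the numerical characterization of quadrics of Dedieu--H\"oring \cite{DH17} identifies $\mathcal{C}_x$ with a smooth quadric and concludes $X\cong Q^n$.

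The main obstacle is precisely the passage from the dimension and finiteness of the tangent map to the genuine isomorphism with $\P^n$ or $Q^n$: showing that $\tau_x$ has degree one and reconstructing the model variety from its family of minimal rational curves. This is the hard content of \cite{CMSB}, \cite{Keb02} and \cite{DH17}, which I would invoke directly rather than reprove, having reduced the pseudoindex hypotheses to the degree conditions $-K_X\cdot C\geq n+1$ (respectively $-K_X\cdot C\geq n$) on all rational curves that those papers take as their input.
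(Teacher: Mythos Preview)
Your proposal is a reasonable and essentially correct outline of the arguments in \cite{CMSB}, \cite{Keb02}, and \cite{DH17}, but note that the paper itself gives no proof of this theorem: it is quoted as a known result, with the attribution to those references serving as the entire justification. So there is nothing in the paper to compare your argument against beyond the citations themselves, and your sketch is precisely a condensed version of what those cited works do.
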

The next critical proposition contributes significantly to the proof of Theorem~\ref{them:main}:
\begin{proposition}\label{prop:bundle} Let $X$ be an $n$-dimensional smooth Fano variety with pseudoindex $\iota_X=\dfrac{n+1}{2}$. Assume $X$ admits either a $\P^{\frac{n+1}{2}}$-bundle structure $\pi: X\to W$ or a $\P^{\frac{n-1}{2}}$-bundle structure $\pi: X\to W$. Then $X$ is isomorphic to one of the following:
$$\P(\cO_{\P^{\iota_X}}(2)\oplus\cO_{\P^{\iota_X}}(1)^{\oplus \iota_X-1}),\quad \P^{\iota_X-1}\times Q^{\iota_X},\quad \P(T_{\P^{\iota_X}}), \quad
\P^{\iota_X-1}\times \P^{\iota_X}.$$
\end{proposition}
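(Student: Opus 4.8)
The plan is to identify the base $W$ first and then the vector bundle. Write $\iota:=\iota_X=(n+1)/2$, so $n=2\iota-1$; in the $\P^{\frac{n+1}{2}}$-bundle case the fibers are $\P^{\iota}$ and $\dim W=\iota-1$, while in the $\P^{\frac{n-1}{2}}$-bundle case the fibers are $\P^{\iota-1}$ and $\dim W=\iota$. Since $\pi$ is smooth and $X$ is smooth, $W$ is smooth, and since $\pi$ has relative Picard number one and the class of a line in a fiber is $K_X$-negative, $\pi$ is an elementary contraction of $X$; in particular $W$ is projective.

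The first real point is the bound $\iota_W\ge\iota$. Let $B\subset W$ be a rational curve with normalization $\nu:\P^1\to B\subset W$. Over $\P^1$ the Brauer group vanishes, so $\pi$ pulls back to $\P\big(\bigoplus_{i=1}^{r}\cO_{\P^1}(a_i)\big)\to\P^1$ with $a_1\le\cdots\le a_r$ (here $r=\iota+1$ or $r=\iota$ according to the case), and the section of minimal relative degree maps to a rational curve $\widetilde B\subset X$ for which $\pi|_{\widetilde B}:\widetilde B\to B$ is birational. A direct computation on this ruled variety (the relative Euler sequence) gives
\[
-K_X\cdot\widetilde B = -K_W\cdot B+\Big(ra_1-\sum_{i}a_i\Big)\le -K_W\cdot B,
\]
while $-K_X\cdot\widetilde B\ge\iota$ because $\widetilde B$ is a rational curve; hence $-K_W\cdot B\ge\iota$ for every rational curve $B$, that is, $\iota_W\ge\iota$. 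The same input shows $W$ is Fano: $\overline{\NE}(X)$ is rational polyhedral with extremal rays generated by rational curves, $\pi_*$ maps it onto $\overline{\NE}(W)$, and $-K_W$ is positive on the nonzero images of those generators. Now apply Theorem~\ref{them:fano:collect}: in the $\P^{\frac{n+1}{2}}$-bundle case $\dim W=\iota-1<\iota\le\iota_W$, so $W\cong\P^{\iota-1}$; in the $\P^{\frac{n-1}{2}}$-bundle case $\dim W=\iota\le\iota_W$, so $W\cong\P^{\iota}$ or $W\cong Q^{\iota}$.

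Because $W$ is $\P^{m}$ or $Q^{m}$, its Brauer group is trivial, so $X\cong\P(\cE)$ for a vector bundle $\cE$ of rank $r$. Assume first $W=\P^{m}$, so $(m,r)=(\iota-1,\iota+1)$ or $(\iota,\iota)$. Rerunning the computation above with $B$ a line $L\subset\P^{m}$, and using $-K_{\P^{m}}\cdot L=m+1$, $-K_X\cdot\widetilde L\ge\iota$ and $m+1-\iota\in\{0,1\}$, one is forced to conclude that the splitting type of $\cE$ along every line equals $(a_1,\dots,a_1)$ when $m+1=\iota$, and equals $(a_1,\dots,a_1)$ or $(a_1,\dots,a_1,a_1+1)$ when $m+1=\iota+1$. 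Thus $\cE$ is uniform, and \cite[Lemme~2.5]{BCDD03} identifies it, up to a twist, as $\cO^{\oplus r}$ --- giving $X\cong\P^{\iota-1}\times\P^{\iota}$, case (4) --- or, in the subcase $W=\P^{\iota}$, as $\cO(1)\oplus\cO^{\oplus\iota-1}$, i.e.\ $X\cong\P(\cO_{\P^{\iota}}(2)\oplus\cO_{\P^{\iota}}(1)^{\oplus\iota-1})$, case (1), or as $T_{\P^{\iota}}(-1)$, i.e.\ $X\cong\P(T_{\P^{\iota}})$, case (3). If instead $W=Q^{\iota}$, then \cite[Corollary~4.7]{Fuj16} forces $\cE$ to be trivial up to a twist, so $X\cong\P^{\iota-1}\times Q^{\iota}$, case (2). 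This exhausts the cases.

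The hard part is the estimate $\iota_W\ge\iota$, together with its sharpened form along the lines of $\P^{m}$: one must pick the right rational curve in $X$ over a given one in $W$ --- the minimal section --- and deal with the possible Brauer twisting of $\pi$ by restricting to $\P^1$, where it splits. The classification of the resulting uniform bundle over $\P^{m}$ and of the Fano bundle over $Q^{m}$ is then the substantive imported input, namely \cite[Lemme~2.5]{BCDD03} and \cite[Corollary~4.7]{Fuj16}, and the splitting-type computation above is exactly what verifies that their hypotheses hold.
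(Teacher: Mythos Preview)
Your argument is correct and follows the same route as the paper: bound $\iota_W\ge\iota_X$, identify $W$ via Theorem~\ref{them:fano:collect}, realize $X\cong\P(\cE)$, then classify $\cE$. The paper simply cites \cite[Lemme~2.5]{BCDD03}, \cite[Proposition~4.3 and Corollary~4.7]{Fuj16}, and \cite{Sato76} for the steps you work out explicitly (the minimal-section estimate for $\iota_W$, Brauer triviality giving $\cE$, and the uniform-bundle classification over $\P^m$); the organization differs only in that the paper invokes \cite[Corollary~4.7]{Fuj16} for the entire $\P^{\frac{n-1}{2}}$-bundle case rather than splitting off $W=\P^{\iota}$ via uniformity as you do.
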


\begin{proof} By \cite[Lemme~2.5~(a)]{BCDD03}, $W$ is a smooth Fano variety whose pseudoindex is at least $\iota_X=\dfrac{n+1}{2}$. Applying Theorem~\ref{them:fano:collect}, $W$ is isomorphic to $\P^{\frac{n-1}{2}}$, $\P^{\frac{n+1}{2}}$ or $Q^{\frac{n+1}{2}}$. By \cite[Proposition~4.3]{Fuj16}, there exists a vector bundle $\cE$ over $W$ such that $X\cong \P(\cE)$. When $\pi: X\to W$ is a $\P^{\frac{n+1}{2}}$-bundle, $W$ is isomorphic to $\P^{\frac{n-1}{2}}$. In this case, \cite[Lemme~2.5~(c)]{BCDD03} and \cite{
Sato76} tell us that $X$ is isomorphic to $\P^{\iota_X-1}\times \P^{\iota_X}$. When $\pi: X\to W$ is a $\P^{\frac{n-1}{2}}$-bundle, our assertion is derived from \cite[Corollary~4.7]{Fuj16}. Thus, our assertion holds.
\end{proof}

\subsection{Extremal contractions} Extremal contractions play a pivotal role in the study of Fano varieties.  Here, we gather some results concerning extremal rays and extremal contractions.
\begin{definition} For a smooth projective variety $X$ and its $K_X$-negative extremal ray $R\subset \overline{NE}(X)$, the {\it length} of $R$ is defined as $$\ell(R):=\min\{-K_X\cdot C\mid C~\mbox{is a rational curve and}~[C]\in R\}.$$
\end{definition}
\begin{theorem}[{Ionescu-Wi\'sniewski inequality \cite[Theorem~0.4]{Ion86}, \cite[Theorem~1.1]{Wis91} }]\label{them:Ion:Wis} Let $X$ be a smooth projective variety, and let $\varphi: X \to Y$ be a contraction of a $K_X$-negative extremal ray $R$, with $E$ representing its exceptional locus. Additionally, consider $F$ as an irreducible component of a non-trivial fiber of $\varphi$. Then
\begin{eqnarray}\label{IW:inequal} \nonumber
\dim E + \dim F \geq  \dim X + \ell(R)- 1.
\end{eqnarray}
\end{theorem}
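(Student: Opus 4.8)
The plan is to apply a deformation-and-breaking argument to a minimal rational curve inside a fiber component. First I would fix $F$, an irreducible component of a non-trivial fiber of $\varphi$ over a point $y\in Y$, and pick a rational curve $C\subset F$ with $-K_X\cdot C=\ell(R)$; such a curve exists by the definition of the length, after noting that any curve contracted by $\varphi$ has class in $R$ and that one can choose $C$ passing through any prescribed point $x\in F$ (using the standard fact that a covering family of minimal curves for $R$ sweeps out $E$, so through a general point of $F$ — and then, by specialization/breaking, through every point — there is such a curve, possibly reducible but with at least one component of the right class and low degree). I would then consider the closed subscheme $\operatorname{Mor}(\P^1,X;0\mapsto x)$ parametrizing morphisms $f:\P^1\to X$ with $f(0)=x$ in the component containing our chosen $C$.

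The key local dimension estimate is the standard bend-and-break bound: at a point $[f]$ corresponding to a curve with $f(0)=x$,
\[
\dim_{[f]}\operatorname{Mor}(\P^1,X;0\mapsto x)\;\ge\;-K_X\cdot f_*\P^1 + \dim X - \dim X \;=\; -K_X\cdot f_*\P^1,
\]
wait — more precisely $\dim_{[f]}\operatorname{Mor}(\P^1,X;0\mapsto x)\ge -K_X\cdot f_*\P^1 - \dim X + \dim X = -K_X\cdot C$ when we fix one point, using $h^1$-vanishing-type estimates; quoting the version in \cite{Kb} or \cite{KM} gives $\dim_{[f]}\operatorname{Mor}(\P^1,X;0\mapsto x,\ \infty\mapsto x')\ge -K_X\cdot C-\dim X$ for two fixed points. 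Next I would argue that all these deformations of $C$ keeping $x$ fixed remain inside $E$: since $C\subset E$ and $E$ is closed, a connected family of curves through $x$ meeting $E$ stays in $E$ (the locus swept out is irreducible, contains $C$, hence lies in $E$); moreover, because $\varphi$ contracts $R$ and deformations of $C$ through the fixed point $x$ have numerical class a multiple of $[C]\in R$, the image curves $\varphi(C_t)$ are points, so actually the whole family lies in the fiber $F' = \varphi^{-1}(y)\supseteq F$. If the family of deformations through $x$ had dimension $\ge \dim X$, bend-and-break would force $C$ to degenerate into a reducible cycle, contradicting minimality of $\ell(R)$; so the evaluation/locus-counting must be bounded, and a careful comparison yields $\dim(\text{locus swept by deformations of }C\text{ through }x) \le \dim F' \cdot(\text{something})$ — the clean way is: $\dim_{[f]}\operatorname{Mor} - 3 \le \dim(\text{sweep})$ combined with the sweep lying in $E$, and the curves through $x$ fill a subvariety of $F'$ of dimension $\ge -K_X\cdot C - 1 = \ell(R)-1$.

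Assembling: the locus covered in $E$ by the family of deformations of $C$ has dimension at most $\dim E$, while the curves through the fixed point $x$ cover a subvariety of the fiber of dimension at least $\ell(R)-1$, and these two families differ in dimension by at most $\dim F$ (moving the point $x$ within $F$, or within $E$); chasing the inequalities gives
\[
\dim E + \dim F \;\ge\; (\text{total sweep}) + (\ell(R)-1) \;\ge\; \dim X + \ell(R) - 1.
\]
I expect the main obstacle to be the bookkeeping around \emph{where} the deformed curves live and \emph{which} dimension counts: one must be careful that the chosen minimal curve $C$ can be taken through an arbitrary point of $F$ (handled by the breaking argument, since a reducible limit still contains a component of class in $R$ of degree $\le \ell(R)$, forcing degree exactly $\ell(R)$ on one component by minimality), and that the $\operatorname{Mor}$-scheme component we track does sweep all of $E$ rather than a proper subvariety. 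Once the deformation locus is correctly identified as sitting inside $E$ with a sub-slice inside a single fiber of dimension $\ge \ell(R)-1$, the inequality is immediate. This is exactly the content of \cite[Theorem~0.4]{Ion86} and \cite[Theorem~1.1]{Wis91}, so for the paper it suffices to cite those; I would include the above as the proof sketch motivating the statement.
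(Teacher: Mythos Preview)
The paper does not prove Theorem~\ref{them:Ion:Wis}; it is stated as a known result with citations to \cite{Ion86} and \cite{Wis91} and used as a black box throughout Section~3. Your own closing remark --- that ``for the paper it suffices to cite those'' --- is therefore exactly how the paper handles it, and nothing more is needed.

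As for the sketch you wrote: the ingredients are the right ones (minimal rational curves in $R$ through a point $x\in F$, the bound $\dim\operatorname{Mor}(\P^1,X;0\mapsto x)\ge \ell(R)$, unsplitness from minimality, and the observation that the swept locus lies in $E$ while the locus through a fixed $x$ lies in the fiber), but the final ``assembling'' step is not correct as written. The line
\[
\dim E + \dim F \;\ge\; (\text{total sweep}) + (\ell(R)-1) \;\ge\; \dim X + \ell(R) - 1
\]
has the inequalities pointing the wrong way: the total sweep is \emph{contained} in $E$, so it is at most $\dim E$, not at least $\dim X$. The clean way to finish is exactly the estimate you already have available as Proposition~\ref{prop:Ion:Wis:2}: for the unsplit family $\mathcal{M}$ of minimal $R$-curves one has $\operatorname{Locus}(\mathcal{M})\subset E$ and $\operatorname{Locus}(\mathcal{M}_x)\subset F$, so
\[
\dim F \;\ge\; \dim\operatorname{Locus}(\mathcal{M}_x) \;\ge\; \ell(R) + \codim_X\operatorname{Locus}(\mathcal{M}) - 1 \;\ge\; \ell(R) + (n - \dim E) - 1,
\]
which rearranges to the desired inequality. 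The missing piece in your write-up is precisely this codimension term; without it the two separate bounds ($\dim F\ge \ell(R)-1$ and sweep $\subset E$) do not combine to give the theorem.
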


\begin{theorem}[{\cite[Theorem~1.3]{HNov13}}]\label{them:HN13} Let $X$ be a smooth projective variety, and let $\varphi: X \to Y$ be a contraction of an  extremal ray $R$ such that any fiber has dimension $d$ and $\ell(R)=d+1$. Then, $\varphi$ is a projective bundle.  
\end{theorem}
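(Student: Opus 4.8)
The plan is to extract the coarse structure of $\varphi$ from the Ionescu--Wi\'sniewski inequality, to identify the general fibre with $\P^d$ via the characterisation of projective space, and then to upgrade this fibrewise statement to genuine local triviality; the last step is where the real work lies. Since every fibre has dimension $d$, the generic fibre has dimension $d\geq 1$, so $\varphi$ is of fibre type and its exceptional locus is all of $X$. Applying Theorem~\ref{them:Ion:Wis} to an irreducible component $F$ of an arbitrary fibre gives $\dim X+\dim F\geq \dim X+\ell(R)-1=\dim X+d$, whence $\dim F\geq d$ for every such component; as the fibre has dimension $d$, each fibre is connected and pure of dimension $d$, and the inequality is an equality throughout. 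For a general fibre $F$, which is smooth, adjunction along the smooth fibration over the smooth locus of $Y$ gives $-K_X|_F=-K_F$. Any rational curve $C\subset F$ is contracted by $\varphi$, so $[C]\in R$ and $-K_F\cdot C=-K_X\cdot C\geq \ell(R)=d+1$; thus $\iota_F\geq\dim F+1$, and Theorem~\ref{them:fano:collect}~(i) forces $F\cong\P^d$.

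The core of the argument is to show that \emph{every} fibre is $\P^d$ and that $\varphi$ is smooth. I would study the family $\mathcal{M}$ of minimal rational curves of $-K_X$-degree $d+1$ lying in the fibres of $\varphi$: these curves dominate $X$, and the standard deformation estimate shows that those passing through a fixed point $x$ of a fibre move in a family of dimension at least $-K_X\cdot C-2=d-1$, while being confined to the $d$-dimensional fibre through $x$. A $(d-1)$-dimensional family of such curves through each point, sweeping out a $d$-dimensional variety, is exactly the extremal configuration realised by the lines of $\P^d$, so the maximality of $\ell(R)$ leaves the family no slack: no member can split, and no rational curve of smaller $-K_X$-degree can appear in a special fibre. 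Combining this rigidity with equidimensionality---which, together with smoothness of $X$ and regularity of the base (itself part of the analysis), yields flatness of $\varphi$ by miracle flatness---one deduces that each fibre is smooth and, being dominated by lines whose family through every point has the correct dimension, is isomorphic to $\P^d$.

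Once $\varphi$ is a flat morphism all of whose fibres are smooth and isomorphic to $\P^d$, it is a smooth morphism, hence a $\P^d$-bundle in the sense fixed in the conventions, i.e.\ a projective bundle (and, by the results already invoked in the proof of Proposition~\ref{prop:bundle}, of the form $\P(\mathcal{E})$). The main obstacle I anticipate is precisely the middle step: controlling the a priori possibly singular or non-reduced special fibres and proving that they cannot degenerate away from $\P^d$. The essential leverage is the maximality $\ell(R)=d+1$, which makes the Ionescu--Wi\'sniewski bound an equality on every fibre and pins the dimension of the family of degree-$(d+1)$ curves through a point to the precise value attained on $\P^d$, so that any degeneration of a fibre would violate this count.
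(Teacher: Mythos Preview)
The paper does not prove this theorem; it is quoted from \cite[Theorem~1.3]{HNov13} and used as a black box in the proofs of Propositions~\ref{prop:bir} and~\ref{prop:fib}. There is therefore no argument in the paper to compare your proposal against.

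On its own merits, your outline is correct through the identification of the general fibre: the Ionescu--Wi\'sniewski inequality forces $\varphi$ to be of fibre type with equidimensional fibres, and adjunction together with Theorem~\ref{them:fano:collect}~(i) identifies the smooth general fibre with $\P^d$. The substantive gap is exactly where you locate it, and two points deserve to be made explicit. First, you appeal to miracle flatness, which requires $Y$ to be regular; regularity of $Y$ is not part of the hypotheses and must be proved, and it is one of the genuinely nontrivial ingredients of the H\"oring--Novelli argument. Second, even granting flatness, the heuristic ``the family of degree-$(d{+}1)$ curves through a point has the same dimension as on $\P^d$, so no degeneration is possible'' is not yet a proof: a dimension count alone does not exclude singular, non-reduced, or reducible special fibres (cones, or unions of linear components, can still carry a $(d{-}1)$-dimensional family of lines through a point). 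Converting the rigidity of the line family into smoothness of every fibre requires further input; in \cite{HNov13} this is carried out via a detailed analysis of the relative family of minimal rational curves and its interaction with the contraction, going beyond the numerics you sketch.
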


\begin{theorem}[{\cite[Theorem~5.1]{AO02}}]\label{them:AO02} For a smooth projective variety $X$ of dimension $n$, the following are equivalent:
\begin{enumerate}
\item There exists an extremal ray $R$ such that the contraction associated to $R$ is divisorial and the fibers have dimension $\ell(R)$.
\item $X$ is the blow-up of a smooth projective variety $X'$ along a smooth subvariety of codimension $\ell(R)+1$.
\end{enumerate} 
\end{theorem}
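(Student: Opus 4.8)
The plan is to treat the two implications separately; the content is in $(i)\Rightarrow(ii)$, while $(ii)\Rightarrow(i)$ is a direct computation. For the latter, suppose $X=\Bl_Z X'$ with $Z\subset X'$ smooth of codimension $c$. The exceptional divisor $E=\P(N_{Z/X'})$ is a $\P^{c-1}$-bundle over $Z$, so the blow-down $\varphi:X\to X'$ is divisorial and equidimensional with all fibers of dimension $c-1$. Using $K_X=\varphi^{*}K_{X'}+(c-1)E$ and $E\cdot\ell=-1$ for a line $\ell$ in a fiber $\P^{c-1}$, one computes $-K_X\cdot\ell=c-1$; as such lines are the minimal rational curves contracted by $\varphi$, the contracted ray $R$ has length $\ell(R)=c-1$ and every fiber has dimension $\ell(R)$, which is $(i)$ with $c=\ell(R)+1$.

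For $(i)\Rightarrow(ii)$ set $d:=\ell(R)$, let $E$ be the (irreducible) exceptional divisor and $Z:=\varphi(E)$; equidimensionality gives $\dim Z=(n-1)-d$, so $\codim_Y Z=d+1$ and the Ionescu--Wi\'sniewski inequality (Theorem~\ref{them:Ion:Wis}) is an equality. Since $\varphi$ is a divisorial elementary contraction, $-E$ is $\varphi$-ample, so $E\cdot C'<0$ for every curve $C'$ contracted by $\varphi$. The crucial step is to identify the fibers. On a general, hence smooth, fiber $F$ the normal bundle $N_{F/E}$ is trivial (it is a fiber of $\varphi|_E$ over a smooth point of $Z$), so $\det N_{F/X}\cong\cO_F(E)$; adjunction then gives, for every rational curve $C'\subset F$, $-K_F\cdot C'=-K_X\cdot C'-E\cdot C'\ge d+1=\dim F+1$. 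Thus $\iota_F\ge\dim F+1$, and the characterization of projective space in Theorem~\ref{them:fano:collect}~(i) yields $F\cong\P^{d}$. Feeding back the length bound $-K_X\cdot\ell\ge d$ for a line $\ell\subset\P^{d}$, where $-K_X\cdot\ell=(d+1)+E\cdot\ell$, forces $E\cdot\ell=-1$, so $E|_F\cong\cO_{\P^{d}}(-1)$.

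Next I would globalize. Granting the smoothness of $E$ and of $Z$, adjunction on the divisor $E$ gives $-K_E\cdot\ell=-(K_X+E)\cdot\ell=d+1$ for a line $\ell$ in a fiber of $\psi:=\varphi|_E:E\to Z$, while every fiber of $\psi$ has dimension $d$; Theorem~\ref{them:HN13} then exhibits $\psi$ as a projective bundle, so $E\cong\P(\cN)$ for a rank-$(d+1)$ bundle $\cN$ on $Z$. Granting in addition that $Y$ is smooth, the ideal sheaf $\cI_Z$ satisfies $\cI_Z\cdot\cO_X=\cO_X(-E)$ (since $\varphi^{-1}(Z)=E$ and $-E$ is $\varphi$-ample), so the universal property of blowing up produces a morphism $X\to\Bl_Z Y$ over $Y$; being birational and finite between smooth varieties it is an isomorphism, and $\codim_Y Z=d+1=\ell(R)+1$, as required.

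The main obstacle is exactly the smoothness of the target and of the exceptional data: a priori only the \emph{general} fiber of $\varphi$ is smooth, so the identification $F\cong\P^{d}$ and the value $E\cdot\ell=-1$ are first available only generically and must be propagated to \emph{every} fiber, and one must then prove that $Y$ and $Z=\varphi(E)$ are smooth. This requires a local analysis of the contraction --- using the relative ampleness of $-E$ to produce a relative $\cO(1)$ restricting to $\cO_{\P^{d}}(1)$ on each fiber, and descending this projective-bundle datum to control the singularities of $Y$ along $Z$. Once this structural input is secured, the fiber computation above together with the universal property of the blow-up closes the argument.
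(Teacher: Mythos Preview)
The paper does not prove this statement: Theorem~\ref{them:AO02} is quoted from \cite[Theorem~5.1]{AO02} and used as a black box, so there is no proof here to compare your proposal against.

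As for the content of your attempt, the implication $(ii)\Rightarrow(i)$ is fine. For $(i)\Rightarrow(ii)$ you have correctly isolated the skeleton of the argument and the genuine difficulty, but you have not actually overcome it: the phrases ``granting the smoothness of $E$ and of $Z$'' and ``granting in addition that $Y$ is smooth'' are precisely the substance of the Andreatta--Occhetta theorem, and your final paragraph acknowledges that this local analysis remains to be done. In particular, you only establish $F\cong\P^{d}$ and $E|_F\cong\cO_{\P^d}(-1)$ for a \emph{general} fiber, and the passage to every fiber together with the smoothness of $Y$ along $Z$ is where the real work lies (in \cite{AO02} this is handled via Fujita's $\Delta$-genus and a careful study of the relatively ample line bundle). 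So your write-up is an accurate outline rather than a proof.
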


\begin{remark} For a smooth projective variety $X$, let $\varphi: X\to Y$ and $\psi: X\to Z$ be different elementary contractions of $X$. Then the fibers of $\varphi$ and $\psi$ have a finite intersection. We use this property several times in this paper.
\end{remark}

\subsection{Families of Rational Curves}
Let $X$ denote a smooth projective variety, and let us consider the space of rational curves $\rat^n(X)$  (for details, see \cite[Section~II.2]{Kb}). A {\it family of rational curves} $\mathcal{M}$ on $X$ refers to an irreducible component of $\rat^n(X)$. This family $\mathcal{M}$ is equipped with a $\mathbb{P}^1$-bundle $p: \mathcal{U} \to \mathcal{M}$ and an evaluation morphism $q: \mathcal{U} \to X$. The union of all curves parametrized by $\mathcal{M}$ is denoted by $\text{Locus}(\mathcal{M})$. For a point $x \in X$, the normalization of $p(q^{-1}(x))$ is denoted by $\mathcal{M}_x$, and $\text{Locus}(\mathcal{M}_x)$ denotes the union of all curves parametrized by $\mathcal{M}_x$.

A {\it dominating family} (resp. {\it covering family}) $\mathcal{M}$ is one where the evaluation morphism $q: \mathcal{U} \to X$ is dominant (or surjective). The family $\mathcal{M}$ is termed a {\it minimal rational component} if it is a dominating family with the minimal anticanonical degree among dominating families of rational curves on $X$. Additionally, $\mathcal{M}$ is called {\it locally unsplit} if for a general point $x\in \text{Locus}(\mathcal{M})$, $\mathcal{M}_x$ is proper. The family $\mathcal{M}$ is called {\it unsplit} if it is proper.

\begin{theorem}[{\cite{CMSB, Keb02}}]\label{them:CMSB} Let $X$ be an $n$-dimensional smooth Fano variety and $\cM$ a locally unsplit dominating family of rational curves on $X$. If the anticanonical degree of $\cM$ is at least $n+1$, then $X$ is isomorphic to $\P^n$.
\end{theorem}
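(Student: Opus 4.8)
The plan is to analyze the subfamily $\cM_x$ of members of $\cM$ passing through a general point $x\in X$ and to show, via the tangent map into the projectivized tangent space, that exactly one curve of the family issues from $x$ in each direction; this infinitesimal rigidity is precisely what pins down $\P^n$. Write $d:=-K_X\cdot C$ for a general member $C$, so $d\geq n+1$ by hypothesis. Since $\cM$ is locally unsplit and dominating, $\cM_x$ is proper for general $x$, and the standard estimates for a family through a fixed point give $\dim\cM_x\geq d-2\geq n-1$ together with $\dim\Loc(\cM_x)\geq d-1\geq n$. As $\dim X=n$ and $X$ is irreducible, this forces $\Loc(\cM_x)=X$, so the curves through $x$ cover all of $X$.

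The central tool is the tangent map
$$\tau_x\colon \cM_x\dashrightarrow \P(T_xX)\iso\P^{n-1},$$
sending a member smooth at $x$ to its tangent direction there. The key technical input, due to Kebekus \cite{Keb02}, is that for general $x$ the map $\tau_x$ is a \emph{finite} morphism: a positive-dimensional fiber would yield a one-parameter family of curves through $x$ sharing a fixed tangent direction, and a bend-and-break argument would then split off a member of strictly smaller anticanonical degree through $x$, contradicting the minimality enforced by local unsplitness. I expect this finiteness to be the main obstacle, since it is the deepest geometric ingredient; everything after it is essentially dimension bookkeeping.

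Granting finiteness, $\dim\cM_x\leq\dim\P^{n-1}=n-1$, so the earlier lower bound becomes an equality: $\dim\cM_x=n-1$, $d=n+1$, and $\tau_x$ is a finite surjection onto $\P^{n-1}$. The next step is to show $\tau_x$ is injective, hence of degree one: two distinct members through $x$ with a common tangent direction would, again by bend-and-break combined with the unsplitness of the family, produce a reducible degeneration of anticanonical degree below $d$, which is impossible. A finite birational morphism onto the smooth variety $\P^{n-1}$ is an isomorphism, so $\cM_x\iso\P^{n-1}$ and through $x$ there passes exactly one member of the family in every tangent direction.

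It remains to upgrade this picture to a global isomorphism $X\iso\P^n$. I would consider the $\P^1$-bundle $\cU_x\to\cM_x\iso\P^{n-1}$ together with its evaluation $q\colon\cU_x\to X$; the section over the marked point contracts to $x$, and $q$ is birational because a general point of $X$ lies on a single member through $x$ (two general members meet only at $x$, by minimality and unsplitness). Resolving, the evaluation is identified with the blow-down $\Bl_x\P^n\to\P^n$: the members of $\cM_x$ have anticanonical degree $n+1=-K_{\P^n}\cdot(\text{line})$ and a unique one joins $x$ to each general point, so they behave exactly like the lines through a point of $\P^n$. Matching the resulting elementary contraction with the standard one on $\Bl_x\P^n$, and invoking the smoothness of $X$, then yields $X\iso\P^n$.
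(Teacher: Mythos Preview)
The paper does not give a proof of this theorem at all: it is quoted from \cite{CMSB} and \cite{Keb02} and used as a black box (see the statement of Theorem~\ref{them:CMSB}). So there is nothing in the paper to compare your argument against.

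That said, your outline does follow the strategy of those references: Kebekus's finiteness of the tangent map $\tau_x$ for a general point, the resulting dimension count forcing $d=n+1$ and $\dim\cM_x=n-1$, and the identification of the evaluation $\cU_x\to X$ with the blow-down $\Bl_x\P^n\to\P^n$. The weakest links in your sketch are exactly the hard parts of the actual proofs. First, your justification that $\tau_x$ has degree one (``two distinct members through $x$ with a common tangent direction would, by bend-and-break, produce a reducible degeneration'') is not the argument used: sharing a tangent direction does not by itself set up a bend-and-break, and the genuine proof that $\tau_x$ is birational passes through Kebekus's analysis of curves singular at $x$ together with a degree/Chern-class computation on $\cU_x$ (this is where \cite{CMSB} does real work). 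Second, the last paragraph (``matching the resulting elementary contraction with the standard one on $\Bl_x\P^n$'') hides the step that the evaluation $\cU_x\to X$ is not merely birational but an isomorphism; in the references this is obtained from an explicit identification of the $\P^1$-bundle $\cU_x\to\P^{n-1}$ and a purity/Zariski-main-theorem argument, not just by pattern-matching contractions. None of this affects the present paper, since Theorem~\ref{them:CMSB} is only cited, but if you intend your write-up to stand on its own you should either fill in those two steps or cite them precisely.
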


\begin{proposition}[{\cite[IV Corollary~2.6]{Kb}}]\label{prop:Ion:Wis:2} Let $X$ be a smooth projective variety and $\cM$ a locally unsplit family of rational curves on $X$. For a general point $x \in {\rm Locus}(\cM)$, 
$$\dim {\rm Locus}(\cM_x) \geq \deg_{(-K_X)}\cM+\codim_X{\rm Locus}(\cM) -1.
$$
Moreover, if $\cM$ is unsplit, this inequality holds for any point $x \in {\rm Locus}(\cM)$. 
\end{proposition}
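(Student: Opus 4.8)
The plan is to bound $\dim\Loc(\cM_x)$ from below by producing the expected dimension through two independent estimates and then combining them. Write $n=\dim X$, $d=\deg_{(-K_X)}\cM$, $V=\Loc(\cM)$ and $v=\dim V$, so that $\codim_X V=n-v$; the goal is $\dim\Loc(\cM_x)\ge d+(n-v)-1$. The argument rests on two inputs: a deformation-theoretic lower bound for the dimension of the subfamily $\cM_x$ of curves through $x$, and the fact that the evaluation morphism is generically finite on the curves of $\cM_x$. The latter is where local unsplitness (via bend-and-break) enters, and it is the crux of the proof.

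First I would recall the standard deformation estimate. Since $\cM$ is an irreducible component of $\rat^n(X)$, for a general $[C]\in\cM$ the associated map $f\colon\P^1\to X$ is birational onto its image, and the local dimension of $\Hom(\P^1,X)$ at $[f]$ is at least $\chi(\P^1,f^{*}T_X)=-K_X\cdot C+n=d+n$; passing to the quotient by the three-dimensional group $\Aut(\P^1)$ gives $\dim\cM\ge d+n-3$. Next, using the $\P^1$-bundle $p\colon\cU\to\cM$ (so $\cU$ is irreducible and $\dim\cU=\dim\cM+1$) together with the evaluation $q\colon\cU\to X$ with $q(\cU)=V$, a general fibre of the dominant morphism $q\colon\cU\to V$ has dimension $\dim\cU-v=\dim\cM+1-v$. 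For general $x\in V$ a general curve through $x$ meets $x$ in a single point, so $p$ restricts to a generically finite map $q^{-1}(x)\to\cM_x$, whence
$$\dim\cM_x=\dim\cM+1-v\ \ge\ d+n-v-2=d+\codim_X V-2.$$

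It remains to show $\dim\Loc(\cM_x)\ge\dim\cM_x+1$, that is, that the evaluation $q_x\colon\cU_x:=p^{-1}(\cM_x)\to\Loc(\cM_x)$ is generically finite; combined with the previous displayed inequality and $\dim\cU_x=\dim\cM_x+1$ this yields exactly $\dim\Loc(\cM_x)\ge d+\codim_X V-1$. This is the step I expect to be the main obstacle, and it is precisely where the hypothesis is used: for general $x$, local unsplitness makes $\cM_x$ a proper, hence unsplit, family of irreducible rational curves. If $q_x$ were not generically finite, then for a general $y\in\Loc(\cM_x)$ the curves of $\cM_x$ passing through both $x$ and $y$ would move in a positive-dimensional family; fixing these two points and applying the bend-and-break principle (as in \cite{Kb}) would force a member of the closure to degenerate into a reducible or non-reduced curve, contradicting the properness of $\cM_x$. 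Hence through $x$ and a general second point there are only finitely many curves of $\cM_x$, each meeting $y$ in finitely many points, so $q_x$ is generically finite, as required.

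Finally, for the \emph{moreover} clause I would note that when $\cM$ is unsplit the whole argument is uniform in $x$: the morphism $q\colon\cU\to V$ is proper and surjective, so $\dim q^{-1}(x)\ge\dim\cU-v$ for \emph{every} $x$ by upper semicontinuity of fibre dimension; each $\cM_x$ is a closed, hence unsplit, subfamily; and bend-and-break applies with the same general second point $y$. Thus the inequality holds for every $x\in\Loc(\cM)$.
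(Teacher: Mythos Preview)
The paper does not supply its own proof of this proposition: it is quoted verbatim as \cite[IV Corollary~2.6]{Kb} and used as a black box. Your argument is essentially the standard proof from that reference---the deformation-theoretic lower bound $\dim\cM\ge d+n-3$, the fibre-dimension count for $q\colon\cU\to V$ to get $\dim\cM_x\ge d+\codim_X V-2$, and then bend-and-break to force generic finiteness of $q_x$---so there is nothing to compare: you have correctly reconstructed the cited proof rather than found an alternative route. One small remark: the map $p\restriction q^{-1}(x)$ has \emph{finite} (not merely generically finite) fibres, since the fibre over $[C]$ is the finite set of preimages of $x$ on the $\P^1$ over $[C]$; and in the unsplit clause the inequality $\dim q^{-1}(x)\ge\dim\cU-v$ is not semicontinuity but the elementary fact that every nonempty fibre of a dominant morphism of irreducible varieties has dimension at least the relative dimension. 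Neither point affects the validity of your argument.
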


\section{Proof of the main theorem} 

\subsection{The case when $X$ admits a birational elementary contraction} In this subsection, we aim to establish the following proposition:

\begin{proposition}\label{prop:bir} Let $X$ be a smooth Fano variety with $\iota_X=\dfrac{n+1}{2}$ and $\rho_X>1$. Assume there exists a birational contraction $\varphi: X\to Y$ of an extremal ray $R$. Then $X$ is isomorphic to $\P(\cO_{\P^{\iota_X}}(2)\oplus\cO_{\P^{\iota_X}}(1)^{\oplus \iota_X-1})$.
\end{proposition}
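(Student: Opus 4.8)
The plan is to analyze the birational contraction $\varphi\colon X\to Y$ via the Ionescu--Wi\'sniewski inequality and show it must be the blow-up of a smooth variety along a linear subspace, and then identify $Y$ with $\P^n$. Write $E$ for the exceptional locus and $F$ for an irreducible component of a nontrivial fiber; let $\ell(R)$ be the length of $R$. Since every rational curve has anticanonical degree $\geq \iota_X=\frac{n+1}{2}$, we have $\ell(R)\geq \frac{n+1}{2}$. First I would feed this into Theorem~\ref{them:Ion:Wis}: $\dim E+\dim F\geq n+\ell(R)-1\geq n+\frac{n-1}{2}=\frac{3n-1}{2}$. Because $E$ is a proper subset of $X$ we have $\dim E\leq n-1$, so $\dim F\geq \frac{3n-1}{2}-(n-1)=\frac{n+1}{2}=\iota_X$. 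On the other hand $F$ is contained in a fiber of $\varphi$, and for a birational contraction $\dim F\leq \dim E\leq n-1$; moreover since $\varphi$ is birational and $X$ is Fano with $\rho_X>1$, the general fiber of $\varphi$ over its image is a point, so $\dim E + \dim F\le 2(n-1)$ forces $\ell(R)\leq n-1$ unless we are in a degenerate situation, and one checks $\ell(R)=\frac{n+1}{2}$ (a rational curve in a fiber computes $\ell(R)$, and it cannot exceed $\dim F+1$).

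The key numerical step is to pin down $\dim E=n-1$ and $\dim F=\iota_X=\frac{n+1}{2}$, i.e.\ that $\varphi$ is a \emph{divisorial} contraction all of whose fibers have dimension exactly $\ell(R)$. Here I would argue: if $\varphi$ were small ($\dim E\leq n-2$), then the inequality $\dim E+\dim F\geq n+\ell(R)-1\geq \frac{3n-1}{2}$ combined with $\dim F\leq \dim E$ would give $2\dim E\geq \frac{3n-1}{2}$, hence $\dim E\geq \frac{3n-1}{4}$; but $\dim F\leq \dim E\le n-2$ and $\ell(R)\leq \dim F+1\leq n-1$, and re-examining the inequality $\dim E\geq n+\ell(R)-1-\dim F\geq n+\ell(R)-1-\dim E$ yields $\dim E\geq \frac{n+\ell(R)-1}{2}\geq \frac{n+\frac{n+1}{2}-1}{2}=\frac{3n-1}{4}$. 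For a small contraction the general nontrivial fiber has dimension $\ge \ell(R)$ but the locus $E$ is too small; a clean contradiction should come from the fact that a small contraction of a Fano has $\dim E\geq \dim F+1$ is false in general — instead I will use that a small contraction cannot exist when $2\ell(R)\ge n+1$ by a direct dimension count showing $F$ would fill up $X$. Thus $\varphi$ is divisorial, $\dim E=n-1$, and then $\dim F\geq \ell(R)\geq \frac{n+1}{2}$; since all fibers are equidimensional of dimension $\dim F$ inside the divisor $E$ (again by the inequality applied to every fiber, using minimality of $\ell(R)$ among curves in $R$), we get $\dim F=\ell(R)$. Now Theorem~\ref{them:AO02} applies: $X$ is the blow-up of a smooth projective variety $X'$ along a smooth subvariety $Z$ of codimension $\ell(R)+1=\frac{n+3}{2}$, so $\dim Z=n-\frac{n+3}{2}=\frac{n-3}{2}=\iota_X-2$.

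It remains to identify $X'$ and $Z$. Since $X$ is Fano and the blow-up $X\to X'$ has exceptional divisor $E\cong \P(N^{\vee})$ over $Z$, standard adjunction gives $-K_X = \varphi^*(-K_{X'}) - (\ell(R))E$, and restricting $-K_X$ to a line in a fiber $F\cong \P^{\ell(R)}$ of $\varphi$ shows that $X'$ is again Fano (this is where I expect the main obstacle: controlling positivity of $-K_{X'}$ and the pseudoindex of $X'$ under the blow-down, since $\iota_{X'}$ could a priori drop). One shows $\iota_{X'}\geq \iota_X = \frac{n+1}{2}$: any rational curve on $X'$ either avoids $Z$, in which case its strict transform has the same anticanonical degree up to the contribution of $E$ which is $\ge 0$, or meets $Z$ and can be analyzed by moving it off $Z$ using that $\dim Z=\iota_X-2$ is small. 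Then Theorem~\ref{them:fano:collect}, applied to the $n$-dimensional Fano $X'$ with $\iota_{X'}\geq \frac{n+1}{2}$, combined with $\rho_{X'}=\rho_X-1$ and the inductive structure, forces $X'\cong \P^n$ (one rules out $Q^n$ because a quadric has no smooth subvariety $Z$ of dimension $\frac{n-3}{2}$ whose blow-up is Fano of the required pseudoindex, or more directly because $\iota_{Q^n}=n-1>\frac{n+1}{2}$ would contradict $\iota_X=\frac{n+1}{2}$ via the curves of $Q^n$ not meeting $Z$). With $X'=\P^n$ and $Z\subset\P^n$ a smooth subvariety of dimension $\iota_X-2$, the Fano condition on the blow-up together with $2\iota_X=n+1$ pins $Z$ down to be a linear $\P^{\iota_X-2}$; the resulting $X$ is then the projective bundle $\P(\cO_{\P^{\iota_X}}(2)\oplus \cO_{\P^{\iota_X}}(1)^{\oplus \iota_X-1})$ by the standard description of the blow-up of $\P^n$ along a linear subspace, which is exactly conclusion~(1) of Theorem~\ref{them:main}. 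The last identification of $Z$ as linear is the most delicate point, and I would handle it by the second projection $X\to \P^{\iota_X-1}$ (the other ruling of the $\P^1$-bundle $E\to Z$ extended over $X$) whose existence forces $Z$ to be linear.
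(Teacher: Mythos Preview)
Your proposal has genuine gaps, and the overall strategy differs from the paper's in a crucial way.

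\textbf{Divisoriality.} Your dimension count does not rule out small contractions for $n\ge 7$. If $\dim E\le n-2$, Theorem~\ref{them:Ion:Wis} gives only $\dim F\ge \frac{n+3}{2}$, which is compatible with $\dim F\le \dim E\le n-2$ once $n\ge 7$; the assertion that ``$F$ would fill up $X$'' is not what the inequality says. The paper's Claim~\ref{cl:bir} uses a different mechanism: take a minimal rational component $\cM$, observe that $\iota_X=\frac{n+1}{2}$ forces $\cM$ to be unsplit covering, so $\dim{\rm Locus}(\cM_x)\ge\frac{n-1}{2}$ for $x\in F$; if $E$ were not a divisor then $\dim F\ge\frac{n+3}{2}$ and ${\rm Locus}(\cM_x)\cap F$ would contain a curve, contradicting \cite[II, Cor.~4.21]{Kb}. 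This rational-curve argument is essential and is absent from your outline.

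\textbf{Equidimensionality and the identification of the target.} Once $E$ is a divisor, the Ionescu--Wi\'sniewski inequality gives $\dim F\ge \ell(R)$, not equality, and gives no upper bound on $\dim F$; your claimed equidimensionality ``by the inequality applied to every fiber'' is not justified. The paper obtains the missing upper bound by introducing a \emph{second} extremal contraction $\psi\colon X\to Z$, chosen so that $E\cdot C'>0$ for the minimal curve $C'\in R'$. One shows $\psi$ is of fiber type and, by playing the fibers of $\varphi$ and $\psi$ against each other, that $\psi$ is an equidimensional $\P^{\frac{n-1}{2}}$-bundle and that every nontrivial fiber $F$ of $\varphi$ satisfies $\dim F=\frac{n+1}{2}=\ell(R)$ (because $\psi|_F$ is finite onto $Z$ and $\dim Z=\frac{n+1}{2}$). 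Only then does Theorem~\ref{them:AO02} apply. The paper never attempts to compute $\iota_Y$ or identify $Y$ directly; instead it uses Lazarsfeld's theorem on the finite surjection $\psi|_F\colon \P^{\frac{n+1}{2}}\to Z$ to get $Z\cong\P^{\frac{n+1}{2}}$, and then concludes via Proposition~\ref{prop:bundle}. Your route through $\iota_{X'}$, ruling out $Q^n$, and then arguing that the center must be linear, is a substantially harder path with several unjustified steps (moving curves off a center of dimension $\iota_X-2$, and the final ``second projection'' argument), and it is not needed.
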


To prove this proposition, throughout this subsection, let $X$ be a smooth Fano variety with $\iota_X=\dfrac{n+1}{2}$ and $\rho_X>1$. Assume there exists a birational contraction $\varphi: X\to Y$ of an extremal ray $R$. We denote by $E$ the exceptional locus of $\varphi$ and by $F$ an irreducible component of a nontrivial fiber of $\varphi$. 

\begin{claim}\label{cl:bir} The exceptional locus $E$ forms a divisor, meaning that $\varphi: X\to Y$ is a divisorial contraction.
\end{claim}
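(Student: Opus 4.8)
The strategy is the standard Ionescu–Wiśniewski dimension count combined with the classification of extremal contractions of maximal fiber dimension. Suppose for contradiction that $\varphi: X\to Y$ is a small contraction, so $\codim_X E\geq 2$. Let $R$ be the contracted extremal ray, $\ell(R)$ its length, $E$ its exceptional locus, and $F$ an irreducible component of a nontrivial fiber. Since every rational curve in $R$ is in particular a rational curve on $X$, we have $\ell(R)\geq \iota_X=\frac{n+1}{2}$. By the Ionescu–Wiśniewski inequality (Theorem \ref{them:Ion:Wis}),
$$
\dim E+\dim F\geq n+\ell(R)-1\geq n+\frac{n+1}{2}-1=\frac{3n-1}{2}.
$$
On the other hand $\dim F\leq \dim E-1$ for a small contraction (the general fiber over $\varphi(E)$ has dimension $\dim E-\dim\varphi(E)$ and $\dim\varphi(E)\geq 1$ since $\codim_X E\geq 2$ forces $\dim E\leq n-2$ while $\varphi$ is birational; more carefully, any fiber component $F$ satisfies $\dim F\leq \dim E - \dim\varphi(E)\leq \dim E-1$). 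Hence $2\dim E-1\geq \frac{3n-1}{2}$, i.e. $\dim E\geq \frac{3n+1}{4}$. Combined with $\dim E\leq n-2$ this gives $\frac{3n+1}{4}\leq n-2$, so $n\geq 9$; so the easy numerics do not immediately close the case and one must push harder.

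The cleaner route, which I would actually carry out: first show $\dim F$ is large enough to force $E$ to be a divisor. From $\dim E+\dim F\geq n+\ell(R)-1$ and $\dim E\leq n-1$ always, we get $\dim F\geq \ell(R)\geq \frac{n+1}{2}$. If $\varphi$ were small, then $\dim E\leq n-2$, so $\dim F\geq n+\ell(R)-1-\dim E\geq \ell(R)+1\geq\frac{n+3}{2}$. Now I would invoke the behaviour of a second extremal contraction: since $\rho_X>1$, pick another extremal ray $R'\neq R$ with contraction $\psi:X\to Z$; by the Remark, fibers of $\varphi$ and $\psi$ meet in finitely many points, so $\dim F+\dim F'\leq n$ for any fiber $F'$ of $\psi$ meeting $F$. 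Applying Proposition \ref{prop:Ion:Wis:2} (or Theorem \ref{them:Ion:Wis}) to a minimal rational component through a general point and estimating $\dim\Loc(\cM_x)$, one shows every fiber of every extremal contraction has dimension $\geq \iota_X-1=\frac{n-1}{2}$, so $\dim F'\geq\frac{n-1}{2}$, giving $\frac{n+3}{2}+\frac{n-1}{2}\leq n$, i.e. $n+1\leq n$, a contradiction. Therefore $\varphi$ cannot be small, and being birational it is divisorial.

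The main obstacle I anticipate is making the lower bound $\dim F'\geq\frac{n-1}{2}$ for fibers of the auxiliary contraction $\psi$ fully rigorous — this requires knowing that $X$ carries a covering (locally unsplit) family of rational curves of anticanonical degree $\geq\iota_X$ whose locus-through-a-point has the expected dimension, and then relating that to fibers of $\psi$ via Proposition \ref{prop:Ion:Wis:2}; one has to be careful that the family deforming inside a fiber of $\psi$ is unsplit (which holds because $\iota_X=\frac{n+1}{2}$ makes degree-$\iota_X$ curves unsplittable). If instead the paper's argument uses only the single contraction $\varphi$ together with Theorem \ref{them:fano:collect} applied to a fiber $F$ (noting $F$ is covered by the restrictions of the extremal rational curves, so $F$ is Fano-like of pseudoindex $\geq\iota_X$ and dimension $\geq\frac{n+3}{2}$, forcing $F\cong\P^{\dim F}$ and then contradicting $\dim F\leq\dim E-1\leq n-2$ via a normal bundle computation), that is an alternative I would also try; the delicate point there is controlling the normal bundle of $F$ in $X$ to get the contradiction. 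I expect the official proof to take one of these two paths.
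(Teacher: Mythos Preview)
Your argument has a genuine gap at the step where you introduce a second extremal contraction $\psi:X\to Z$ and a fiber $F'$ of $\psi$ meeting $F$. The inequality $\dim F+\dim F'\leq n$ is only useful once you know such an $F'$ exists, i.e.\ that some nontrivial fiber of $\psi$ actually meets $F$. If $\psi$ is of fiber type this is automatic, but a priori every extremal contraction other than $\varphi$ could be birational with exceptional locus disjoint from $F$. (The existence of a fiber-type contraction is precisely Claim~\ref{cl:fiber}, proved \emph{after} Claim~\ref{cl:bir} and using that $E$ is a divisor, so invoking it here would be circular.)

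The paper avoids this by replacing the hypothetical $F'$ with ${\rm Locus}(\cM_x)$ for a minimal rational component $\cM$ and a point $x\in F$; this passes through $x$ by construction. One checks that $\cM$ is unsplit and covering (its anticanonical degree lies in $[\iota_X,n]$, and $2\iota_X>n$ prevents splitting), so Proposition~\ref{prop:Ion:Wis:2} gives $\dim{\rm Locus}(\cM_x)\geq\frac{n-1}{2}$ for every $x$. Combined with $\dim F\geq\frac{n+3}{2}$ (your computation), one gets $\dim\bigl({\rm Locus}(\cM_x)\cap F\bigr)\geq 1$. The contradiction then comes from \cite[II, Corollary~4.21]{Kb}: every curve in ${\rm Locus}(\cM_x)$ is numerically proportional to $[\cM]$, so a curve in the intersection would force $[\cM]\in R$, hence ${\rm Locus}(\cM)\subset E\subsetneq X$, contradicting that $\cM$ covers $X$. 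You do mention $\cM$ and ${\rm Locus}(\cM_x)$, but only as a heuristic for bounding $\dim F'$; the point is to use ${\rm Locus}(\cM_x)$ itself as the direct substitute for $F'$.
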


\begin{proof} Let us consider a minimal rational component $\mathcal{M}$ on $X$. According to Theorem~\ref{them:CMSB}, the anticanonical degree of $\mathcal{M}$ is at most $n$. Combining with our assumption that $\iota_X=\frac{n+1}{2}$ and \cite[II, Proposition~2.2]{Kb}, it follows that $\mathcal{M}$ is an unsplit covering family. For any $x \in F$, Proposition~\ref{prop:Ion:Wis:2} implies $\dim {\rm Locus}(\mathcal{M}_x)\geq \frac{n-1}{2}$. To establish our assertion, let us assume the contrary, namely $\text{codim}_XE\geq 2$. Then, by Theorem~\ref{them:Ion:Wis}, it follows that $\dim F\geq \frac{n+3}{2}$. Consequently, $\dim({\rm Locus}(\mathcal{M}_x)\cap F)\geq 1$. By \cite[II, Corollary~4.21]{Kb}, this leads to a contradiction.
\end{proof}

Utilizing Theorem~\ref{them:Ion:Wis}, we infer $\dim F\geq \iota_X=\dfrac{n+1}{2}$. Since the Kleiman-Mori cone $\overline{NE}(X)$ of a Fano variety $X$ is polyhedral and each extremal ray is generated by a rational curve, we can identify an extremal ray $R'$ and a rational curve $C'$ such that $R'=\R_{\geq 0}[C']$, $\ell(R')=-K_X\cdot C'$ and $E\cdot C'>0$. We denote by $\psi: X\to Z$ the contraction of an extremal ray $R'$. 

\begin{claim}\label{cl:fiber} $\psi: X\to Z$ is of fiber type. 
\end{claim}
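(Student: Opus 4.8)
The plan is to argue by contradiction: I will assume $\psi$ is birational and then produce an irreducible subvariety lying in a fiber of $\varphi$ together with one lying in a fiber of $\psi$ that meet in a positive-dimensional set, which is impossible since $\varphi$ and $\psi$ are distinct elementary contractions (by the Remark recalled above, the fibers of two distinct elementary contractions have finite intersection). First I would note that $\varphi$ and $\psi$ really are distinct: any curve $C\subset E$ with $[C]\in R$ satisfies $E\cdot C<0$, whereas $E\cdot C'>0$, so $R'\neq R$.

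Next, since $E\cdot C'>0$ the curve $C'$ meets $E$, and I fix a point $x\in C'\cap E$. On the $\varphi$-side: because $x\in E=\Exc(\varphi)$, the fiber of $\varphi$ through $x$ is positive-dimensional (a $0$-dimensional fiber at $x$ would, by Zariski's main theorem, make $\varphi$ a local isomorphism at $x$), so I pick an irreducible component $F_1$ of it with $x\in F_1$. Applying Theorem~\ref{them:Ion:Wis} to $\varphi$, whose exceptional locus $E$ is a divisor of dimension $n-1$ by Claim~\ref{cl:bir}, gives $\dim F_1\geq n+\ell(R)-1-\dim E=\ell(R)\geq\iota_X=\frac{n+1}{2}$. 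On the $\psi$-side: the curve $C'$ is contracted by $\psi$ and passes through $x$, so $C'$ is contained in an irreducible component $F_2$ of the nontrivial fiber $\psi^{-1}(\psi(x))$, and $x\in F_2$. Writing $E''$ for the exceptional locus of $\psi$, we have $\dim E''\leq n-1$ whether $\psi$ is divisorial or small, so Theorem~\ref{them:Ion:Wis} applied to $\psi$ gives $\dim F_2\geq n+\ell(R')-1-\dim E''\geq n+\iota_X-1-(n-1)=\iota_X=\frac{n+1}{2}$.

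To conclude, I would use that in the smooth variety $X$ of dimension $n$, every component of $F_1\cap F_2$ through the common point $x$ has dimension at least $\dim F_1+\dim F_2-n\geq\frac{n+1}{2}+\frac{n+1}{2}-n=1$, so $F_1\cap F_2$ is infinite. But $F_1$ is contained in a fiber of $\varphi$ and $F_2$ in a fiber of $\psi$, and since $\varphi\neq\psi$ such fibers meet in a finite set — a contradiction. Hence $\psi$ is not birational, i.e. $\psi:X\to Z$ is of fiber type.

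The point I would be most careful about, and the only real subtlety, is ensuring that the chosen $x$ lies simultaneously on a positive-dimensional component of a fiber of $\varphi$ and on one of a fiber of $\psi$ — not merely that the two fibers through $x$ are positive-dimensional. Choosing $x$ on the curve $C'$ handles the $\psi$-side (it forces $C'\subseteq F_2$), and $x\in\Exc(\varphi)$ handles the $\varphi$-side. I do not expect any further obstacle; in particular the estimate is uniform in the divisorial and small cases for $\psi$, so no case analysis is needed, and the numerical equality $2\iota_X=n+1$ is exactly what makes the intersection dimension positive.
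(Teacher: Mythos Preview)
Your proof is correct and follows essentially the same route as the paper's: assume $\psi$ is birational, use $E\cdot C'>0$ to find a point lying in nontrivial fibers of both $\varphi$ and $\psi$, bound each fiber's dimension below by $\iota_X=\frac{n+1}{2}$ via the Ionescu--Wi\'sniewski inequality, and derive a positive-dimensional intersection contradicting the finiteness of fiber intersections for distinct elementary contractions. Your choice of $x\in C'\cap E$ (rather than merely $x\in E\cap\Exc(\psi)$) and your explicit verification that $R\neq R'$ are slightly more careful than the paper's phrasing, but the argument is the same.
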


\begin{proof} Assuming the contrary, that is, $\psi$ is of birational type, let $E'$ be the exceptional locus and $F'$ an irreducible component of a nontrivial fiber of $\psi$. By Theorem~\ref{them:Ion:Wis}, we have $\dim F'\geq \iota_X=\dfrac{n+1}{2}$. Since $E\cdot C'>0$, we have $E\cap E'\neq \emptyset$. By replacing the fibers $F$ and $F'$ if necessary, we may assume that $F\cap F'\neq \emptyset$. Then we obtain 
$$
\dim F+\dim F'-\dim X \geq \dfrac{n+1}{2} \times 2 -n =1.
$$ 
This leads to $\varphi=\psi$; this is a contradiction. Therefore, $\psi: X\to Z$ is of fiber type. 
\end{proof}

Let $F_{\rm gen}'$ denote any fiber of $\psi$ whose dimension is equal to $\dim X-\dim Z$.
Applying Theorem~\ref{them:Ion:Wis}, we have 
\begin{eqnarray}\label{eq:1}
\dim F'_{\rm gen}\geq \ell(R')-1\geq \dfrac{n-1}{2}. 
\end{eqnarray}
According to $E\cdot C'>0$, $\psi|_E: E\to Z$ is surjective. Since $\psi|_F: F \to Z$ is finite, we have 
\begin{eqnarray}\label{eq:2}
\dim Z \geq \dim F\geq \dfrac{n+1}{2}. 
\end{eqnarray}
Now we have
$n=\dim X=\dim F'_{\rm gen}+\dim Z \geq  \dfrac{n-1}{2}+\dfrac{n+1}{2}=n$. 
This yields 
$$
\left(\dim F'_{\rm gen},  \dim Z \right)= \left(\dfrac{n-1}{2},\dfrac{n+1}{2}\right)
$$
Moreover (\ref{eq:1}) and (\ref{eq:2}) imply that $\ell(R')=\dim F =\dfrac{n+1}{2}$.
Assume there exists a jumping fiber $F'_{\rm sp}$ of $\psi$, meaning $\dim F'_{\rm sp}>\dim F'_{\rm gen}=\dfrac{n-1}{2}$.
Taking an irreducible component $F$ of a nontrivial fiber of $\varphi$ such that $F'_{\rm sp}\cap F\neq \emptyset$, we have
$$
\dim F'_{\rm sp}+\dim F-\dim X>\dfrac{n-1}{2}+\dfrac{n+1}{2}-n=0.
$$ This is a contradiction. As a consequence, $\psi$ is equidimensional. Since $\ell(R')=\dfrac{n+1}{2}=\dim F'_{\rm gen}+1$, Theorem~\ref{them:HN13} tells us that $\psi$ is a $\P^{\frac{n-1}{2}}$-bundle.

By Theorem~\ref{them:Ion:Wis}, we have 
$$
n-1 +\dfrac{n+1}{2} =\dim E+\dim F \geq n+\ell(R)-1\geq n-1+\dfrac{n+1}{2}.
$$
This yields that, for any nontrivial fiber $F$ of $\varphi$, we have $\dim F=\dfrac{n+1}{2}=\ell(R)$. Applying Theorem~\ref{them:AO02}, we see that $\varphi: X\to Y$ is the blow-up of a smooth variety $Y$ along a smooth subvariety $\varphi(E)$ of codimension $\ell(R)+1=\dfrac{n+3}{2}$. Hence any nontrivial fiber $F$ of $\varphi$ is isomorphic to $\P^{\frac{n+1}{2}}$. Since we have a finite morphism $\psi|_F: F\cong \P^{\frac{n+1}{2}} \to Z$ between smooth projective varieties of dimension $\dfrac{n+1}{2}$, $\psi|_F: F\to Z$ is a finite surjective morphism. By \cite[Theorem~4.1]{Laz84}, $Z$ is isomorphic to $\P^{\frac{n+1}{2}}$. Since $\psi: X\to \P^{\frac{n+1}{2}}$ is a $\P^{\frac{n-1}{2}}$-bundle, Proposition~\ref{prop:bundle} implies Proposition~\ref{prop:bir}.

\subsection{The case when any elementary contraction of $X$ is of fiber type.} In this subsection, we aim to establish the following proposition:

\begin{proposition}\label{prop:fib} Let $X$ be a smooth Fano variety with $\iota_X=\dfrac{n+1}{2}$ and $\rho_X>1$. Assuming that any elementary contraction of $X$ is of fiber type, then $X$ is isomorphic to $\P^{\iota_X-1}\times Q^{\iota_X}$, $\P(T_{\P^{\iota_X}})$, or $\P^{\iota_X-1}\times \P^{\iota_X}$.
\end{proposition}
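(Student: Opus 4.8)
The plan is to run a two-contraction analysis entirely analogous to Wiśniewski's, but now with two distinct fiber-type elementary contractions playing the role that birational and fiber-type contractions played in Proposition~\ref{prop:bir}. Since $\rho_X>1$ we may pick two distinct extremal rays $R_1,R_2$ with contractions $\varphi_i\colon X\to W_i$, both of fiber type by hypothesis. Let $F_i$ be a general fiber of $\varphi_i$, so $\dim F_i\geq \ell(R_i)-1\geq \iota_X-1=\frac{n-1}{2}$ by Theorem~\ref{them:Ion:Wis}. The first key point is the dimension count: because $\varphi_1,\varphi_2$ are distinct elementary contractions a general fiber of one meets a fiber of the other in finitely many points (the Remark after Theorem~\ref{them:AO02}), so $\dim F_1+\dim F_2\leq n$. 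Combined with $\dim F_i\geq\frac{n-1}{2}$ this pins down $\{\dim F_1,\dim F_2\}\subseteq\{\frac{n-1}{2},\frac{n+1}{2}\}$ with $\dim F_1+\dim F_2\in\{n-1,n\}$, and one then wants to upgrade this to statements about the whole contraction, not just general fibers.

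The second key point is to show each $\varphi_i$ is in fact equidimensional with fibers of dimension $\ell(R_i)-1$, hence a projective bundle by Theorem~\ref{them:HN13}. For this I would argue as in the proof of Proposition~\ref{prop:bir}: a jumping fiber $F_{\rm sp}$ of $\varphi_i$ would have $\dim F_{\rm sp}\geq \frac{n+1}{2}$, and intersecting it with a suitable fiber of the other contraction $\varphi_j$ (chosen through a point of $F_{\rm sp}$, using that $\varphi_j$ is surjective hence its fibers sweep out $X$) gives $\dim F_{\rm sp}+\dim F_j-n>0$ in the generic-$F_j$ case, forcing $R_i=R_j$, a contradiction — provided $\dim F_j=\frac{n-1}{2}$ generically. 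One has to be slightly careful: if both contractions had general fiber dimension $\frac{n+1}{2}$ then $\dim F_1+\dim F_2=n+1>n$, impossible; so at least one, say $\varphi_2$, has general fiber dimension exactly $\frac{n-1}{2}$, and then $\varphi_1$ has general fibers of dimension $\frac{n-1}{2}$ or $\frac{n+1}{2}$. Running the jumping-fiber argument for $\varphi_1$ against the equidimensional $\varphi_2$ shows $\varphi_1$ is also equidimensional. Thus both contractions are projective bundles: $\varphi_2$ is a $\P^{\frac{n-1}{2}}$-bundle, and $\varphi_1$ is either a $\P^{\frac{n-1}{2}}$-bundle or a $\P^{\frac{n+1}{2}}$-bundle.

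Once a projective-bundle structure $\varphi_i\colon X\to W_i$ of the right relative dimension is in hand, Proposition~\ref{prop:bundle} applies directly and forces $X\cong \P^{\iota_X-1}\times Q^{\iota_X}$, $\P(T_{\P^{\iota_X}})$, $\P^{\iota_X-1}\times\P^{\iota_X}$, or $\P(\cO_{\P^{\iota_X}}(2)\oplus\cO_{\P^{\iota_X}}(1)^{\oplus\iota_X-1})$. To finish I must exclude the last possibility: the blow-up $\P(\cO_{\P^{\iota_X}}(2)\oplus\cO_{\P^{\iota_X}}(1)^{\oplus\iota_X-1})$ carries a birational elementary contraction (the blow-down), contradicting the standing hypothesis that every elementary contraction of $X$ is of fiber type. (Equivalently one checks $\rho_X=2$ here and its two contractions are the bundle map and the blow-down.) This yields exactly the three cases in Proposition~\ref{prop:fib}.

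The main obstacle I anticipate is the equidimensionality step — specifically making the jumping-fiber argument airtight when $\varphi_1$ has general fiber dimension $\frac{n+1}{2}$ and $\dim Z$-type bookkeeping becomes tight, and ensuring one genuinely gets a bundle (not merely an equidimensional fibration) in every branch so that Theorem~\ref{them:HN13} applies. A secondary subtlety is confirming that the base $W_i$ of the bundle has pseudoindex large enough for Proposition~\ref{prop:bundle}; but that is exactly what \cite[Lemme~2.5~(a)]{BCDD03} delivers, so it is folded into Proposition~\ref{prop:bundle} and needs no separate argument here.
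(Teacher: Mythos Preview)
Your approach mirrors the paper's: pick two fiber-type elementary contractions, use Ionescu--Wi\'sniewski plus the finite-intersection property to bound all fiber dimensions between $\frac{n-1}{2}$ and $\frac{n+1}{2}$, force at least one contraction to be a projective bundle of the right rank, apply Proposition~\ref{prop:bundle}, and then exclude the blow-up via the standing fiber-type hypothesis. The paper organizes this into three cases according to whether the first contraction is a $\P^{\frac{n+1}{2}}$-bundle, a $Q^{\frac{n+1}{2}}$-fibration, or a $\P^{\frac{n-1}{2}}$-fibration; you collapse these, but the logic is the same.

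There is, however, a genuine gap in your equidimensionality step, and it lies in a different place than you anticipate. When $\varphi_1$ has general fiber dimension $\frac{n+1}{2}$ everything is fine: all fibers of $\varphi_1$ are already bounded above by $\frac{n+1}{2}$, so $\varphi_1$ is equidimensional automatically, and then any jumping fiber of $\varphi_2$ (dimension $\geq \frac{n+1}{2}$) meets a $\varphi_1$-fiber of dimension $\frac{n+1}{2}$ in a positive-dimensional set --- contradiction. The problematic case is when \emph{both} contractions have general fiber dimension $\frac{n-1}{2}$ (the paper's case~(C)): here your inequality ``$\dim F_{\rm sp} + \dim F_j - n > 0$'' fails, since $\frac{n+1}{2}+\frac{n-1}{2}-n=0$. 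The fix (which the paper asserts tersely) is: if $\varphi_1$ has a jumping fiber $F_{\rm sp}$ of dimension $\frac{n+1}{2}$, then $\varphi_2|_{F_{\rm sp}}$ is finite onto the $\frac{n+1}{2}$-dimensional base $W_2$, hence surjective; so $F_{\rm sp}$ meets \emph{every} $\varphi_2$-fiber, in particular any hypothetical jumping fiber $F'_{\rm sp}$ of $\varphi_2$, and then $\dim F_{\rm sp}+\dim F'_{\rm sp}-n=1>0$ gives the contradiction. Thus at least one of the two is equidimensional and hence a $\P^{\frac{n-1}{2}}$-bundle by Theorem~\ref{them:HN13}.

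A minor further point: when $\dim F_1^{\rm gen}=\frac{n+1}{2}$ one may have $\ell(R_1)=\frac{n+1}{2}$ rather than $\frac{n+3}{2}$, in which case $\varphi_1$ is only a $Q^{\frac{n+1}{2}}$-fibration, not a projective bundle (the paper's case~(B)). This does not affect your conclusion, since $\varphi_2$ is the bundle fed into Proposition~\ref{prop:bundle}, but your sentence ``$\varphi_1$ is either a $\P^{\frac{n-1}{2}}$-bundle or a $\P^{\frac{n+1}{2}}$-bundle'' overclaims.
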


To prove this proposition, throughout this subsection, we stay within the confines of the present subsection, maintaining the setting where $X$ is a smooth Fano variety with $\iota_X=\dfrac{n+1}{2}$ and $\rho_X>1$. Assume that any elementary contraction of $X$ is of fiber type. For different extremal rays $R$ and $R'$ of $\overline{NE}(X)$, consider the elementary contractions $\varphi:X\to Y$ and $\psi: X\to Z$ associated to $R$ and $R'$ respectively. We denote by $F$ (resp. $F'$) any fiber of $\varphi$ (resp. $\psi$). Using Theorem~\ref{them:Ion:Wis}, we infer
\begin{eqnarray}\label{eq:3}
\dim F\geq \ell(R)-1\geq \dfrac{n-1}{2}\quad \mbox{and}\quad
\dim F'\geq \ell(R')-1\geq \dfrac{n-1}{2}.
\end{eqnarray}
Since we have 
\begin{eqnarray}\label{eq:4}
\dim F+\dim F'-n\leq 0, 
\end{eqnarray}
it turns out that $\dim F$ and $\dim F'$ are at most $\dfrac{n+1}{2}$. Thus, denoting by $F_{\rm gen}$ (resp. $F_{\rm gen}'$) any fiber of $\varphi$ (resp. $\psi$) whose dimension is equal to $\dim X-\dim Y$ (resp. $\dim X- \dim Z$), $(\dim F_{\rm gen }, \dim Y)$ and $(\dim F_{\rm gen}', \dim Z)$ are either:
$$
\left(\dfrac{n+1}{2}, \dfrac{n-1}{2}\right)\quad\mbox{or}\quad \left(\dfrac{n-1}{2}, \dfrac{n+1}{2}\right).
$$
We now claim:
\begin{claim}\label{cl:varphi} $\varphi$ and $\psi$ are one of the following:
\begin{enumerate}
\item a $\P^{\frac{n+1}{2}}$-bundle;
\item a $Q^{\frac{n+1}{2}}$-fibration;
\item a $\P^{\frac{n-1}{2}}$-fibration.
\end{enumerate}
\end{claim}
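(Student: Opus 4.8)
The plan is to analyze the possible fiber type contractions $\varphi$ according to the two possibilities for $(\dim F_{\rm gen}, \dim Y)$ already isolated, and to rule out every fibration whose general fiber is not $\P^{\frac{n+1}{2}}$, $Q^{\frac{n+1}{2}}$ or $\P^{\frac{n-1}{2}}$. I would treat $\varphi$ and $\psi$ symmetrically, so it suffices to argue for one elementary fiber type contraction $\varphi\colon X\to Y$ of an extremal ray $R$.

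First suppose $(\dim F_{\rm gen},\dim Y)=\left(\dfrac{n+1}{2},\dfrac{n-1}{2}\right)$. By the Ionescu--Wi\'sniewski inequality (Theorem~\ref{them:Ion:Wis}) we have $\dim F_{\rm gen}\geq \ell(R)-1$, so $\ell(R)\leq \dfrac{n+3}{2}$; on the other hand $\ell(R)\geq \iota_X=\dfrac{n+1}{2}$. If $\ell(R)=\dfrac{n+3}{2}$, then $\ell(R)=\dim F_{\rm gen}+1$, and I would first argue equidimensionality of $\varphi$ (a jumping fiber $F_{\rm sp}$ would satisfy $\dim F_{\rm sp}\geq \dfrac{n+3}{2}$, but then a general fiber $F'$ of a different contraction $\psi$, which has dimension $\geq \dfrac{n-1}{2}$, meets $F_{\rm sp}$ in positive dimension since $\dim F_{\rm sp}+\dim F'-n>0$, contradicting that fibers of distinct elementary contractions meet in a finite set); then Theorem~\ref{them:HN13} forces $\varphi$ to be a $\P^{\frac{n+1}{2}}$-bundle, giving case (i). If instead $\ell(R)=\dfrac{n+1}{2}=\dim F_{\rm gen}$, I would use Proposition~\ref{prop:Ion:Wis:2} applied to an unsplit family of minimal degree in $R$: since $\deg_{-K_X}$ of such a family is exactly $\ell(R)=\dfrac{n+1}{2}$ and its locus through a general point of a fiber has dimension $\geq \dfrac{n-1}{2}$, and this locus sits inside the fiber $F_{\rm gen}$ of dimension $\dfrac{n+1}{2}$, the general fiber is covered by an unsplit family of rational curves of anticanonical degree equal to $\dim F_{\rm gen}$; I would then invoke Theorem~\ref{them:CMSB} (applied to the fiber, which is itself Fano of the relevant dimension once one checks adjunction along the general fiber) to conclude $F_{\rm gen}\cong \P^{\frac{n+1}{2}}$ — but with degree equal to the dimension, not dimension plus one, the Cho--Miyaoka--Shepherd-Barron characterization instead yields $F_{\rm gen}\cong \P^{\frac{n+1}{2}}$ or $Q^{\frac{n+1}{2}}$ (cf.\ \cite{CMSB}, \cite{DH17}). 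This produces cases (i) and (ii).

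Now suppose $(\dim F_{\rm gen},\dim Y)=\left(\dfrac{n-1}{2},\dfrac{n+1}{2}\right)$. Here Theorem~\ref{them:Ion:Wis} gives $\ell(R)\leq \dim F_{\rm gen}+1=\dfrac{n+1}{2}$, and combined with $\ell(R)\geq \iota_X=\dfrac{n+1}{2}$ we get $\ell(R)=\dfrac{n+1}{2}=\dim F_{\rm gen}+1$. Equidimensionality of $\varphi$ follows as before: a jumping fiber $F_{\rm sp}$ would have $\dim F_{\rm sp}\geq \dfrac{n+1}{2}$, and intersecting it with a general fiber $F'$ of a second elementary contraction $\psi$ (of dimension $\geq \dfrac{n-1}{2}$) gives $\dim F_{\rm sp}+\dim F'-n>0$, again contradicting finiteness of the intersection of fibers of distinct elementary contractions — unless $\varphi=\psi$, which we exclude. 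With $\varphi$ equidimensional and $\ell(R)=\dim F_{\rm gen}+1$, Theorem~\ref{them:HN13} would again give a $\P^{\frac{n-1}{2}}$-bundle; but to be safe, in the generality of the claim (which only asserts a $\P^{\frac{n-1}{2}}$-\emph{fibration}) I would simply record that the general fiber is $\P^{\frac{n-1}{2}}$ by Proposition~\ref{prop:Ion:Wis:2} together with Theorem~\ref{them:CMSB} applied to $F_{\rm gen}$ (degree $\dfrac{n+1}{2}=\dim F_{\rm gen}+1$ forces projective space). This is case (iii).

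The main obstacle I anticipate is the application of the Cho--Miyaoka--Shepherd-Barron/Kebekus type characterization \emph{to the general fiber} $F_{\rm gen}$ rather than to $X$ itself: one must verify that $F_{\rm gen}$ is Fano (equivalently, that the restriction of a minimal family of curves in $R$ sweeps out $F_{\rm gen}$ and that adjunction along the general fiber behaves as expected, using that $\varphi$ is equidimensional and smooth in codimension one), and that the relevant covering family of rational curves on $F_{\rm gen}$ is locally unsplit of the stated degree. The borderline degree-equals-dimension case, which admits the quadric in addition to projective space, is exactly what allows case (ii) to appear and must be handled with care; I would lean on \cite{DH17} for the precise statement covering that threshold. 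Everything else is bookkeeping with \eqref{eq:3} and \eqref{eq:4} and the standard fact that fibers of distinct elementary contractions meet in finite sets.
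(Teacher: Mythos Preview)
Your approach is essentially the paper's: split according to $\dim F_{\rm gen}\in\{\frac{n+1}{2},\frac{n-1}{2}\}$, bound $\ell(R)$ via \eqref{eq:3}, and then identify the general fiber using the characterizations of $\P^m$ and $Q^m$. The paper streamlines the middle step by observing that the general fiber $F_{\rm gen}$ of an elementary fiber-type contraction is automatically a smooth Fano variety with $-K_{F_{\rm gen}}=-K_X|_{F_{\rm gen}}$, hence with pseudoindex at least $\iota_X$; Theorem~\ref{them:fano:collect} then applies directly to $F_{\rm gen}$, and there is no need to build an unsplit family via Proposition~\ref{prop:Ion:Wis:2} or to worry about the ``main obstacle'' you flag---adjunction on a general fiber is immediate. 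One minor slip: in the $\dim F_{\rm gen}=\frac{n-1}{2}$ case your jumping-fiber computation gives $\frac{n+1}{2}+\frac{n-1}{2}-n=0$, not $>0$, so equidimensionality does not follow from that argument; but as you yourself note, the claim only asserts a $\P^{\frac{n-1}{2}}$-\emph{fibration}, so this is harmless (the paper defers the bundle upgrade to the subsequent case analysis).
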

\begin{proof} It is enough to consider the structure of $\varphi$. 
 Assume $\dim F_{\rm gen }=\dfrac{n+1}{2}$. By inequality (\ref{eq:4}), $\varphi$ is equidimensional. By inequality (\ref{eq:3}), we see that $\ell(R)=\dfrac{n+3}{2}$ or $\dfrac{n+1}{2}$. In the former case, it follows from Theorem~\ref{them:HN13} that $\varphi$ is a $\P^{\frac{n+1}{2}}$-bundle. In the latter case, following Theorem~\ref{them:fano:collect}, $\varphi$ is a $Q^{\frac{n+1}{2}}$-fibration. On the other hand, if $\dim F_{\rm gen }=\dfrac{n-1}{2}$, then Theorem~\ref{them:fano:collect} yields that $\varphi$ is a $\P^{\frac{n-1}{2}}$-fibration.
\end{proof}
Without loss of generality, we may assume that $\dim F_{\rm gen } \geq \dim F_{\rm gen }'$. Then the pair of $\varphi$ and $\psi$ is one of the following:
\begin{enumerate} 
\item[(A)] $\varphi$ is a $\P^{\frac{n+1}{2}}$-bundle and $\psi$ is a $\P^{\frac{n-1}{2}}$-fibration;
\item[(B)] $\varphi$ is a $Q^{\frac{n+1}{2}}$-fibration and $\psi$ is a $\P^{\frac{n-1}{2}}$-fibration;
\item[(C)] $\varphi$ and $\psi$ are $\P^{\frac{n-1}{2}}$-fibrations.
\end{enumerate}
By inequality (\ref{eq:4}) and Theorem~\ref{them:HN13}, in case (B), $\psi$ is a $\P^{\frac{n-1}{2}}$-bundle. In case (C), either $\varphi$ or $\psi$ turns into a $\P^{\frac{n-1}{2}}$-bundle. Consequently, Proposition~\ref{prop:bundle} infers Proposition~\ref{prop:fib}. \subsection{Conclusion}
By Proposition~\ref{prop:bir} and Proposition~\ref{prop:fib}, we obtain Theorem~\ref{them:main}.

\subsection*{Acknowledgments} The author would like to extend their gratitude to Professor Taku Suzuki for reviewing the initial draft of this paper. Professor Suzuki not only identified errors but also provided a proof of Claim $3.2$.

\subsection*{Conflict of Interest.} The author has no conflicts of interest directly relevant to the content of this article.

\subsection*{Data availability.} Data sharing not applicable to this article as no data sets were generated or analyzed during the current study.

\bibliographystyle{plain}
\bibliography{biblio}

\begin{thebibliography}{10}

\bibitem{AO02}
Marco Andreatta and Gianluca Occhetta.
\newblock Special rays in the {M}ori cone of a projective variety.
\newblock {\em Nagoya Math. J.}, 168:127--137, 2002.

\bibitem{BCDD03}
Laurent Bonavero, Cinzia Casagrande, Olivier Debarre, and St\'{e}phane Druel.
\newblock Sur une conjecture de {M}ukai.
\newblock {\em Comment. Math. Helv.}, 78(3):601--626, 2003.

\bibitem{CMSB}
Koji Cho, Yoichi Miyaoka, and Nicholas~I. Shepherd-Barron.
\newblock Characterizations of projective space and applications to complex
  symplectic manifolds.
\newblock In {\em Higher dimensional birational geometry ({K}yoto, 1997)},
  volume~35 of {\em Adv. Stud. Pure Math.}, pages 1--88. Math. Soc. Japan,
  Tokyo, 2002.

\bibitem{DH17}
Thomas Dedieu and Andreas H\"{o}ring.
\newblock Numerical characterisation of quadrics.
\newblock {\em Algebr. Geom.}, 4(1):120--135, 2017.

\bibitem{Fuj16}
Kento Fujita.
\newblock Around the {M}ukai conjecture for {F}ano manifolds.
\newblock {\em Eur. J. Math.}, 2(1):120--139, 2016.

\bibitem{Har}
Robin Hartshorne.
\newblock {\em Algebraic geometry}.
\newblock Springer-Verlag, New York-Heidelberg, 1977.
\newblock Graduate Texts in Mathematics, No. 52.

\bibitem{HNov13}
Andreas H\"{o}ring and Carla Novelli.
\newblock Mori contractions of maximal length.
\newblock {\em Publ. Res. Inst. Math. Sci.}, 49(1):215--228, 2013.

\bibitem{Ion86}
Paltin Ionescu.
\newblock Generalized adjunction and applications.
\newblock {\em Math. Proc. Cambridge Philos. Soc.}, 99(3):457--472, 1986.

\bibitem{Keb02}
Stefan Kebekus.
\newblock Characterizing the projective space after {C}ho, {M}iyaoka and
  {S}hepherd-{B}arron.
\newblock In {\em Complex geometry ({G}\"ottingen, 2000)}, pages 147--155.
  Springer, Berlin, 2002.

\bibitem{Kb}
J{\'a}nos Koll{\'a}r.
\newblock {\em Rational curves on algebraic varieties}, volume~32 of {\em
  Ergebnisse der Mathematik und ihrer Grenzgebiete. 3. Folge. A Series of
  Modern Surveys in Mathematics [Results in Mathematics and Related Areas. 3rd
  Series. A Series of Modern Surveys in Mathematics]}.
\newblock Springer-Verlag, Berlin, 1996.

\bibitem{KM}
J{\'a}nos Koll{\'a}r and Shigefumi Mori.
\newblock {\em Birational geometry of algebraic varieties}, volume 134 of {\em
  Cambridge Tracts in Mathematics}.
\newblock Cambridge University Press, Cambridge, 1998.
\newblock With the collaboration of C. H. Clemens and A. Corti, Translated from
  the 1998 Japanese original.

\bibitem{Laz84}
Robert Lazarsfeld.
\newblock Some applications of the theory of positive vector bundles.
\newblock In {\em Complete intersections ({A}cireale, 1983)}, volume 1092 of
  {\em Lecture Notes in Math.}, pages 29--61. Springer, Berlin, 1984.

\bibitem{Occ06}
Gianluca Occhetta.
\newblock A characterization of products of projective spaces.
\newblock {\em Canad. Math. Bull.}, 49(2):270--280, 2006.

\bibitem{Sato76}
Ei-ichi Sato.
\newblock Uniform vector bundles on a projective space.
\newblock {\em J. Math. Soc. Japan}, 28(1):123--132, 1976.

\bibitem{Mukai88}
Mukai Shigeru.
\newblock Problems on characterization of the complex projective space.
\newblock {\em Birational Geometry of Algebraic Varieties: Open Problems, XXIII
  Intern. Symp., Division Math., Taniguchi Foundation, Aug. 22–27, 1988,
  Katata, Japan, 15-17.}

\bibitem{Wis90b}
Jaros\l aw~A. Wi\'{s}niewski.
\newblock On a conjecture of {M}ukai.
\newblock {\em Manuscripta Math.}, 68(2):135--141, 1990.

\bibitem{Wis91ind}
Jaros\l aw~A. Wi\'{s}niewski.
\newblock On {F}ano manifolds of large index.
\newblock {\em Manuscripta Math.}, 70(2):145--152, 1991.

\bibitem{Wis91}
Jaros{\l}aw~A. Wi{\'s}niewski.
\newblock On contractions of extremal rays of {F}ano manifolds.
\newblock {\em J. Reine Angew. Math.}, 417:141--157, 1991.

\end{thebibliography}
\end{document}